\newtheorem{theorem}{Theorem}[section]
\newtheorem{lemma}[theorem]{Lemma}
\newtheorem{corollary}[theorem]{Corollary}
\theoremstyle{definition}
\numberwithin{equation}{section}
\newcommand{\CC}{\mathbb C}
\newcommand{\cD}{\mathcal D}
\newcommand{\cA}{\mathcal A}
\newcommand{\PP}{\mathbb P}
\newcommand{\ZZ}{\mathbb Z}
\newcommand{\Orth}{\mathop{\null\mathrm {O}}\nolimits}
\newcommand{\rank}{\mathop{\mathrm {rk}}\nolimits}
\newcommand{\latt}[1]{{\langle{#1}\rangle}}
\def\det{\operatorname{det}}
\begin{document}

\title[$2$-reflective lattices of signature $(n,2)$ with $n\geq 8$]{$2$-reflective lattices of signature $(n,2)$ with $n\geq 8$}

\author{Haowu Wang}

\address{Center for Geometry and Physics, Institute for Basic Science (IBS), Pohang 37673, Korea}

\email{haowu.wangmath@gmail.com}

\subjclass[2020]{11F55, 51F15, 17B67, 14J28}

\date{\today}

\keywords{Orthogonal modular forms, Reflection groups, Borcherds products, Reflective lattices}

\begin{abstract}
An even lattice $M$ of signature $(n,2)$ is called $2$-reflective if there is a non-constant modular form for the orthogonal group of $M$ which vanishes only on quadratic divisors orthogonal to $2$-roots of $M$. In 2017 Ma \cite{Ma17} proved that there are only finitely many $2$-reflective lattices of signature $(n,2)$ with $n\geq 7$. In this paper we extend the finiteness result of Ma to $n\geq 5$ and show that there are exactly forty-two $2$-reflective lattices of signature $(n,2)$ with $n\geq 8$.
\end{abstract}

\maketitle

\section{Introduction}
Let $M$ be an even lattice of signature $(n,2)$ with $n\geq 3$.  The type IV Hermitian symmetric domain $\cD(M)$ attached to $M$ is a connected component of the space
\begin{equation*}
\{[\mathcal{Z}] \in  \PP(M\otimes \CC):  (\mathcal{Z}, \mathcal{Z})=0, (\mathcal{Z},\bar{\mathcal{Z}}) < 0\}.
\end{equation*}
We denote by $\Orth^+ (M)$ the orthogonal group preserving $\cD(M)$ and $M$. Let $\Gamma$ be a finite-index subgroup of $\Orth^+(M)$ and $k$ be an integer. A holomorphic function $F$ on the affine cone 
$$
\cA(M)=\{ \mathcal{Z} \in M\otimes\CC : [\mathcal{Z}] \in \cD(M) \}
$$
is called a \textit{modular form} of weight $k$ and character $\chi$ for $\Gamma$
if it satisfies
\begin{align*}
F(t\mathcal{Z})&=t^{-k}F(\mathcal{Z}), \quad \forall t \in \CC^\times,\\
F(g\mathcal{Z})&=\chi(g)F(\mathcal{Z}), \quad \forall g\in \Gamma.
\end{align*}
A non-constant modular form $F$ is called \textit{reflective} if it vanishes only on quadratic divisors  
$$
l^\perp = \{ [\mathcal{Z}] \in \cD(M) : (\mathcal{Z}, l)=0 \}
$$
for some roots $l\in M$, that is, $l$ are primitive positive-norm vectors of $M$ whose associated reflection 
$$
\sigma_l: x\mapsto x-\frac{2(l,x)}{(l,l)}l, \quad x\in M
$$
fixes the lattice $M$, i.e. $\sigma_l \in \Orth^+(M)$. Bruinier's result \cite{Bru02, Bru14} yields that reflective modular forms can usually be constructed as automorphic Borcherds products \cite{Bor95, Bor98}.

Reflective modular forms first appeared in the works of Borcherds \cite{Bor95, Bor98} and Gritsenko--Nikulin \cite{GN98a, GN98b}. They have many important applications to generalized Kac--Moody algebras \cite{Bor92, Bor98, GN98a, GN98b, GN02, Sch06}, hyperbolic reflection groups \cite{Bor00, GN98a}, birational geometry of moduli spaces \cite{Bor96, BKP98, GN96, GN00, GHS07, GH14, Gri18, Ma18} and the classification and construction of free algebras of modular forms \cite{Wan21}. It is a common belief that reflective modular forms are very rare. In 1998 Gritsenko and Nikulin \cite[Conjecture 2.2.1]{GN98a} proposed the arithmetic mirror symmetry conjecture, stating that the number of lattices with a reflective modular form is finite up to scaling. Since then, many classifications of reflective modular forms have been obtained \cite{GN02, Bar03, Sch06, Sch17, Ma17, Ma18, Dit18, Wan18, GN18, Wan19, Wan22}. 

In this paper we study $2$-reflective modular forms, the most basic class of reflective modular forms. A reflective modular form on $\Gamma < \Orth^+(M)$ is called \textit{$2$-reflective} if its zero divisor is a linear combination of quadratic divisors $l^\perp$ for $l\in M$ with $(l,l)=2$. An even lattice $M$ is called \textit{$2$-reflective} if there is a $2$-reflective modular form for some finite-index subgroup of $\Orth^+(M)$. It follows from the symmetrization trick that if $M$ is $2$-reflective then there is a $2$-reflective modular form for $\Orth^+(M)$. Gritsenko and Nikulin observed \cite{GN96, GN00} that $2$-reflective modular forms are related to $K3$ surfaces and Calabi--Yau manifolds, in particular, they have a geometric interpretation as the automorphic discriminant of the moduli space of lattice-polarized $K3$ surfaces. 

There is some relation between $2$-reflective modular forms and hyperbolic $2$-reflection groups. Given a hyperbolic even lattice $S$. Let $W$ be the subgroup of $\Orth^+(S)$ generated by reflections associated with $2$-roots and $\mathcal{M}$ be an associated fundamental polyhedron. If the subgroup $A(\mathcal{M})$ of $\Orth^+(S)$ fixing $\mathcal{M}$ has finite index in the quotient group $\Orth^+(S) / W$ then $S$ is called \textit{$2$-reflective}. A  $2$-reflective hyperbolic lattice $S$ is called \textit{elliptic} if $A(\mathcal{M})$ is finite, otherwise it is called \textit{parabolic}. Nikulin and Vinberg \cite{Nik80b, Nik81, Nik84, Vin84, Nik96, Vin07} proved that the set of $2$-reflective hyperbolic lattices $S$ with $\rank(S)\geq 3$ is finite and gave a full classification of elliptic $2$-reflective hyperbolic lattices (see e.g. \cite[Section 3.2]{GN18} for a list). This classification was motivated by one result of Pjatecki\u{\i}-\v{S}apiro and \v{S}afarevi\v{c} \cite{PS71}, proving that a complex algebraic $K3$ surface with the Picard lattice $S$ has finite automorphism group if and only if $S$ is elliptic $2$-reflective. 

The arithmetic mirror symmetry conjecture of Gritsenko--Nikulin \cite[Section 2]{GN00} predicts that 
\begin{itemize}
    \item[(a)] there are only finitely many $2$-reflective lattices of signature $(n,2)$ with $n\geq 3$;
    \item[(b)] if $M$ has a $2$-reflective modular form $F$ then the hyperbolic lattice $c_M^\perp / \ZZ c$ is $2$-reflective for any primitive norm zero vector $c\in M$ such that $F$ vanishes on some $v^\perp$ with $v\in c_M^\perp / \ZZ c$.
\end{itemize}
Part (b) was proved in 2003 by Looijenga \cite[Corollary 5.11]{Loo03}. Part (a) was later proved in 2017 by Ma \cite{Ma17} for $n\geq 7$. Part (b) of the Gritsenko--Nikulin conjecture does not lead to an exact classification of $2$-reflective lattices, because the classification of parabolic $2$-reflective hyperbolic lattices is unknown. Ma’s proof is in algebraic geometry and his result is ineffective to classify $2$-reflective lattices. It turns out that one may need new ways to attack this problem.

In \cite{Wan19} the author developed an approach based on the theory of Jacobi forms \cite{EZ85, Gri18} to classify $2$-reflective lattices. Let $U$ be an even unimodular lattice of signature $(1,1)$ and $L$ be an even positive definite lattice. This approach yields that  if $2U\oplus L$ has a $2$-reflective modular form then either $L$ has no $2$-roots or $L$ contains a sublattice of the same rank generated by $2$-roots satisfying some strong constrains. After detailed analysis, it was found that there are exactly fifty-one $2$-reflective lattices of type $2U\oplus L$, where $L$ has $2$-roots. 

In this paper we improve Ma's result by means of Part (b) of the Gritsenko--Nikulin conjecture.

\begin{theorem}\label{MTH0}
There are only finitely many $2$-reflective lattices of signature $(n,2)$ with $n\geq 5$.     
\end{theorem}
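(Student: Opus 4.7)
The strategy is to reduce finiteness of $2$-reflective lattices of signature $(n,2)$ to finiteness of $2$-reflective hyperbolic lattices, using part (b) of the Gritsenko-Nikulin conjecture (Looijenga \cite{Loo03}) together with the Nikulin-Vinberg theorem.

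First, for any $2$-reflective $M$ I would produce a primitive isotropic $c\in M$ orthogonal to some $2$-root. Since $M$ is $2$-reflective, it contains a $2$-root $l$, and $l^\perp\cap M$ is an even sublattice of signature $(n-1,2)$ and rank $n+1\geq 6$ when $n\geq 5$. Meyer's theorem on indefinite rational quadratic forms in at least five variables then gives a nontrivial isotropic vector in $l^\perp\otimes\QQ$; scaling into $l^\perp\cap M$ and dividing out the content produces a primitive isotropic $c\in l^\perp\cap M$, which remains primitive in $M$ since $l^\perp\cap M$ is saturated in $M$. By construction $c\perp l$, so the quadratic divisor $l^\perp$ on which the $2$-reflective form $F$ vanishes is of the form $v^\perp$ for $v=l\in c^\perp_M/\ZZ c$, and the hypothesis of Looijenga's theorem is met.

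Applying Looijenga's theorem then yields that the hyperbolic lattice $M':=c^\perp_M/\ZZ c$ of rank $n$ is itself $2$-reflective. By the Nikulin-Vinberg finiteness theorem for $2$-reflective hyperbolic lattices of rank $\geq 3$, the lattice $M'$ belongs to a finite list of isomorphism classes, and in particular has bounded discriminant.

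It remains to bound the fibers of the map $M\mapsto M'$. Writing $m:=\d(c)$ and choosing $d\in M$ with $(c,d)=m$, one computes the identity $|\det M|=m^2|\det M'|$, and the isomorphism class of $M$ is determined by $M'$ together with the triple $\bigl(m,\ \bar d\in(M')^*/M',\ (d,d)\bmod 2m\bigr)$ encoding the gluing data; these last three data range over finite sets as soon as $m$ is bounded. Hence the main obstacle is precisely the bound on $m$. A natural route is to apply the reduction to several distinct $2$-roots of $M$, producing several primitive isotropic vectors with possibly different divisors, and to exploit the combined constraints against the explicit Nikulin-Vinberg list to force $m$ into a finite set. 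The other ingredients in the argument are direct applications of well-established results.
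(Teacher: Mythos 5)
Your overall strategy --- reduce to a hyperbolic quotient $c^\perp_M/\ZZ c$, apply Looijenga's theorem (part (b) of the Gritsenko--Nikulin conjecture), and invoke Nikulin--Vinberg finiteness --- is the same skeleton the paper uses, and the first two steps (Meyer's theorem producing a primitive isotropic $c$ orthogonal to a $2$-root, hence a $2$-reflective hyperbolic lattice $M'$ of bounded discriminant) are sound. The genuine gap is exactly the one you flag yourself: you never bound the divisor $m=\d(c)$, and without that bound the fibers of $M\mapsto M'$ are a priori infinite, since $|\det M|=m^2|\det M'|$ lets $\det M$ grow without limit. The suggested remedy (``apply the reduction to several distinct $2$-roots \ldots{} to force $m$ into a finite set'') is not an argument; there is no mechanism proposed by which different choices of $2$-root would constrain $m$, and the same difficulty recurs for each choice. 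As written, the proof establishes only that the hyperbolic quotients lie in a finite list, not that $M$ does.

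The paper closes this gap by running the reduction in the opposite order: before touching the hyperbolic quotient, it replaces $M$ by an even \emph{overlattice} $M_1$ with $l(M_1)\leq 4$ and $e(M_1)\in\{e(M),e(M)/2\}$ (Ma's Lemma 4.8 in \cite{Ma17}), which by Nikulin's theory splits as $2U\oplus L$. For such a lattice the relevant isotropic vector has divisor $1$, so there is no gluing data to control; Looijenga plus Nikulin--Vinberg then bounds $e(M_1)$, hence $e(M)$, and combined with $l(M)\leq n+2$ this bounds $\det M$ and gives finiteness. (For $n=5,6$ the weaker length bounds force an additional case analysis using complete versus non-complete $2$-reflective forms and \cite[Corollary 1.10]{Ma18}.) If you want to salvage your version, the missing ingredient is precisely some overlattice or exponent-control lemma of this type; the gluing-data parametrization you describe is correct but cannot substitute for it.
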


We also give a complete classification of $2$-reflective lattices of signature $(n,2)$ with $n\geq 8$. 

\begin{theorem}\label{MTH}
There are exactly forty-two $2$-reflective lattices of signature $(n,2)$ with $n\geq 8$ up to isomorphism. They are formulated in Table \ref{tab:2-reflective}. In particular, there is no $2$-reflective lattice of signature $(n,2)$ for $n\geq 13$ and $n\neq 18, 19, 26$. 
\end{theorem}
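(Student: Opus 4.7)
The plan is to put every such $M$ into the normal form $2U\oplus L$ with $L$ an even positive definite lattice of rank $n-2$, and then to invoke the classification of $2$-reflective lattices of type $2U\oplus L$ carried out in the author's earlier work \cite{Wan19}. Assuming the reduction, filtering \cite{Wan19} by the condition $\rank(L)\geq 6$ (equivalently $n\geq 8$) should produce exactly the forty-two lattices listed in Table \ref{tab:2-reflective}, and the non-existence claim for $n\geq 13$ outside $\{18,19,26\}$ will then follow by direct inspection of that table.

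For the reduction step, I would let $F$ be a $2$-reflective form for $\Orth^+(M)$ (symmetrizing $F$ if necessary). Its divisor is supported on quadratic divisors $l^\perp$ for certain $2$-roots $l\in M$; pick any such $2$-root $l_0$. Since $l_0^\perp\subset M$ has signature $(n-1,2)$ with $n-1\geq 7$, it contains a primitive isotropic vector $c$ of $M$, and by Eichler's criterion for indefinite lattices one may further arrange that $c$ generates a unimodular $U$-summand, yielding a splitting $M=U\oplus K$ with $K\cong c^\perp_M/\ZZ c$ hyperbolic of rank $n$. The image $\bar l_0$ of $l_0$ in $K$ is a $2$-root on which $F$ vanishes, so Part (b) of the Gritsenko--Nikulin conjecture, established by Looijenga \cite{Loo03}, forces $K$ to be $2$-reflective as a hyperbolic lattice.

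The remaining step is to split off a second copy of $U$ from $K$. Here I would appeal to the Nikulin--Vinberg classification of $2$-reflective hyperbolic lattices: in the elliptic case the list is explicit (see \cite[Section 3.2]{GN18}), and a direct check shows that every entry of rank $\geq 7$ splits off $U$. The parabolic subcase is more delicate because its classification is incomplete, and this is where I expect the main work to lie: one has to combine Looijenga's theorem applied at several different primitive isotropic vectors of $M$ with the structural constraints on $F$ coming from its description as a Borcherds product \cite{Bor98} and the Jacobi-form obstructions of \cite{Wan19}, in order to rule out $K$ being a parabolic $2$-reflective hyperbolic lattice of rank $\geq 7$ that does not contain $U$. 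Once this is done we have $M\cong 2U\oplus L$ with $L$ positive definite, and the proof concludes by invoking \cite{Wan19} and matching its output against Table \ref{tab:2-reflective}.
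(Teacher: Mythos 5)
There is a genuine gap, and it is fatal to the overall strategy rather than just to one step. Your plan is to reduce every $2$-reflective $M$ of signature $(n,2)$, $n\geq 8$, to the normal form $2U\oplus L$ and then read off the answer from the table of \cite{Wan19}. But the answer itself contains lattices that do not split $2U$ --- indeed some do not split even a single copy of $U$. For example, $2U(2)\oplus 8A_1$ is a rescaling $N(2)$ of an (odd) lattice $N$, so $(v,M)\subseteq 2\ZZ$ for every $v\in M$ and no vector can span a unimodular hyperbolic plane; similarly $U\oplus U(2)\oplus E_8(2)$, $U\oplus U(3)\oplus E_6'(3)$, $2U(2)\oplus 7A_1$, $2U(2)\oplus 6A_1$, etc.\ appear in Table \ref{tab:2-reflective} and are not of the form $2U\oplus L$. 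The paper states explicitly that ten of the forty-two lattices do not occur in the table of \cite[Theorem 1.2]{Wan19}, so ``filtering \cite{Wan19} by $\rank(L)\geq 6$'' cannot produce the classification. The same problem invalidates the intermediate step ``by Eichler's criterion one may arrange that $c$ generates a unimodular $U$-summand'': Eichler's criterion controls the orbit of vectors with prescribed norm and discriminant-group image, but a primitive isotropic $c$ spans a $U$-summand only when $(c,M)=\ZZ$, which requires hypotheses such as maximality or a length bound that a general $2$-reflective $M$ need not satisfy.

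The paper's argument runs in the opposite direction. Instead of splitting $M$ itself, one passes \emph{up} to a suitable even overlattice: $2$-reflectivity is inherited by overlattices (Lemma \ref{Lem:reductionMa}), and by Lemma \ref{lem:maximal} or Lemma \ref{lem:exponent} one can choose an overlattice $M_1=2U\oplus L$ with controlled length and exponent, adjust $L$ within its genus to have $2$-roots (Lemma \ref{lem:2-roots}), and identify $M_1$ from the finite list of \cite[Theorem 1.2]{Wan19}. The real work is then the \emph{descent}: determining which finite-index sublattices $M<M_1$ remain $2$-reflective, which the paper does case by case using the classification of $2$-elementary lattices (Lemma \ref{lem:2-elementary}), the complete $2$-reflectivity mechanism (Lemmas \ref{lem:complete} and \ref{lem:non-complete}), and determinant/length comparisons for intermediate lattices. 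None of this descent appears in your proposal; conversely, the Nikulin--Vinberg classification of $2$-reflective hyperbolic lattices and the parabolic-case analysis you anticipate are not used in the paper's proof of Theorem \ref{MTH} at all (they enter only in the finiteness statement, Theorem \ref{MTH0}).
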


Theorem \ref{MTH} has been proved in \cite[Theorem 1.1]{Wan19} for $n\geq 14$. In this paper we give a simpler proof and extend it to $n\geq 13$ (see Theorem \ref{th:n>=13}). To prove Theorem \ref{MTH} for $n\leq 12$, we combine the main results of \cite{Wan19} and some techniques of lattices so that we can drop the $2U$ assumption. The proof does not rely on the classification of $2$-reflective hyperbolic lattices. Note that there are $10$ lattices in Table \ref{tab:2-reflective} that do not appear in the table of \cite[Theorem 1.2]{Wan19}.

We remark that the converse of Part (b) of the Gritsenko--Nikulin conjecture does not hold. For example, there are indeed elliptic $2$-reflective hyperbolic lattices $S$ of rank $13\leq \rank(S) \leq 17$, but there is no $2$-reflective lattice of signature $(\rank(S), 2)$. We also remark that there are $2$-reflective lattices in Table \ref{tab:2-reflective} which induces parabolic $2$-reflective hyperbolic lattices through Part (b) of the Gritsenko--Nikulin conjecture, such as $U\oplus E_8(2)$, $U(2)\oplus 8A_1$ and $U\oplus E_6'(3)$. 

We have mentioned that elliptic $2$-reflective hyperbolic lattices are related to $K3$ surfaces with finite automorphism group. It would be interesting to know if $2$-reflective lattices in Table \ref{tab:2-reflective} correspond to a certain more special class of $K3$ surfaces (see e.g. \cite[Section 3]{GN00}). 

This paper is organized as follows. In Section \ref{sec:lemmas} we prove some technical lemmas about lattices and $2$-reflective modular forms. Section \ref{sec:Ma} is devoted to the proof of Theorem \ref{MTH0}. In Section \ref{sec:proof} we prove Theorem \ref{MTH} and give three corollaries.  

\begin{table}[ht]
\caption{$2$-reflective lattices of signature $(n,2)$ with $n\geq 8$}
\label{tab:2-reflective}
\renewcommand\arraystretch{1.1}
\[
\begin{array}{|l|l|}
\hline
n & \text{$2$-reflective lattice} \\
\hline 
26 & 2U\oplus 3E_8 \\
\hline 
19 & 2U\oplus 2E_8\oplus A_1 \\
\hline 
18 & 2U\oplus 2E_8 \\
\hline 
12 & 2U\oplus E_8\oplus 2A_1 \\
\hline 
11 & 2U\oplus D_4\oplus 5A_1, \quad 2U\oplus 2D_4\oplus A_1, \quad 2U\oplus D_8\oplus A_1, \quad 2U\oplus E_8\oplus A_1 \\
\hline
10 & 2U\oplus E_8, \quad 2U\oplus D_8, \quad 2U\oplus 2D_4, \quad 2U\oplus D_8'(2), \\
&2U\oplus E_7\oplus A_1, \quad 2U\oplus D_6\oplus 2A_1, \quad 2U\oplus D_4\oplus 4A_1, \quad 2U\oplus 8A_1, \\
&2U\oplus E_8(2), \quad U\oplus U(2)\oplus E_8(2), \quad U\oplus U(2)\oplus 8A_1, \quad 2U(2)\oplus 8A_1\\
\hline 
9 & 2U\oplus D_7, \quad 2U\oplus A_7, \quad 2U\oplus E_7, \quad 2U\oplus E_6\oplus A_1, \quad 2U\oplus D_6\oplus A_1, \\ 
&2U\oplus D_4\oplus 3A_1, \quad 2U\oplus 7A_1, \quad U\oplus U(2)\oplus 7A_1, \quad 2U(2)\oplus 7A_1 \\
\hline 
8 & 2U\oplus D_6, \quad 2U\oplus A_6, \quad 2U\oplus 2A_3, \quad 2U\oplus 3A_2, \quad 2U\oplus E_6, \\
&2U\oplus D_5\oplus A_1, \quad 2U\oplus A_5\oplus A_1, \quad 2U\oplus D_4\oplus 2A_1, \quad 2U\oplus 6A_1, \\
&2U\oplus E_6'(3), \quad U\oplus U(3)\oplus E_6'(3), \quad U\oplus U(2)\oplus 6A_1, \quad 2U(2)\oplus 6A_1 \\
\hline
\end{array} 
\]
\end{table}

\section{Basic lemmas}\label{sec:lemmas}
In this section we collect and prove some basic lemmas about lattices and $2$-reflective modular forms that we will use later. 

Let $M$ be an even lattice of rank $\rank(M)$ with a bilinear form $(-,-)$ and dual lattice $M'$. Let $A_M=M'/M$ denote the discriminant group of $M$. We denote the minimal number of generators of $A_M$ by $l(M)$ and the maximal order of elements of $A_M$ by $e(M)$. The integers $l(M)$ and $e(M)$ are called the length and exponent of $A_M$, respectively. Let us fix a basis of the (unique) even unimodular lattice of signature $(1,1)$ as 
$$
U=\ZZ e + \ZZ f, \quad (e,e)=(f,f)=0, \quad (e,f)=1.
$$
For any positive integer $a$, we denote by $M(a)$ the lattice with abelian group $M$ and rescaled bilinear form $a(-,-)$. The level of $M$ is the smallest positive integer $m$ such that $m(x,x)\in 2\ZZ$ for all $x\in M'$. An embedding $M_1 \hookrightarrow M$ of even lattices is called primitive if $M/M_1$ is a free $\ZZ$-module. A given embedding $M \hookrightarrow M_1$ of even lattices, for which $M_1/M$ is a finite abelian group, is called an even overlattice of $M$. For any $v\in M$ we define an ideal of $\ZZ$ as
$$
(v,M):=\{ (v,x) : x\in M \}.
$$
We use $A_n$, $D_n$, $E_6$, $E_7$ and $E_8$ to denote the usual irreducible root lattices (see \cite{CS99}). We refer to \cite{CS99} for the notion of the genus of a lattice. 

\begin{lemma}\label{lem:U}
Let $M$ be an even lattice of signature $(n,1)$ with $n\geq 2$. If the length of $A_M$ satisfies that $l(M)\leq n-2$, then there exists an even positive definite lattice $L$ such that $M=U\oplus L$.
\end{lemma}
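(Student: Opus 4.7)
The plan is to use Nikulin's classification of even lattices by signature and discriminant form. In broad outline I would (i) construct an even positive-definite lattice $L$ of rank $n - 1$ whose discriminant form is isomorphic to $q_M$, (ii) observe that $U \oplus L$ lies in the same genus as $M$, and (iii) conclude via Nikulin's uniqueness theorem that $M \cong U \oplus L$.

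For step (i), Nikulin's existence theorem asserts that an even lattice of signature $(n-1, 0)$ with discriminant form $q_M$ exists provided $\mathrm{sign}(q_M) \equiv n - 1 \pmod{8}$ and $l(q_M) \leq n - 1$. Both hold here: the first by Milgram's formula applied to $M$ of signature $(n, 1)$, and the second from $l(q_M) = l(M) \leq n - 2$. The remaining $p$-adic local conditions can be met because $q_M$ is realised by the actual lattice $M$ and reducing the rank from $n + 1$ down to $n - 1$ still leaves rank at least $l(A_M) + 1$ at every prime, so one can truncate the unimodular $\ZZ_p$-parts of $M\otimes \ZZ_p$ while keeping the full torsion form. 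For step (ii), $U \oplus L$ is even of signature $(n, 1)$, and since $U$ is unimodular its discriminant form equals $q_L \cong q_M$. Hence $M$ and $U \oplus L$ share signature and discriminant form, i.e., lie in the same genus. For step (iii), I invoke Nikulin's uniqueness theorem: an indefinite even lattice $N$ with $\min(t_+, t_-) \geq 1$ and $\rank(N) \geq l(A_N) + 2$ is determined up to isomorphism by its genus. Here $\rank(M) - l(A_M) \geq (n+1) - (n-2) = 3 \geq 2$, and both parts of the signature $(n, 1)$ are positive, so the theorem applies and $M \cong U \oplus L$.

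The main obstacle is the positive-definite existence step: the $p$-adic realisability conditions for definite lattices can genuinely fail for abstract discriminant forms, so one must exploit that $q_M$ already comes from a concrete lattice $M$ of signature $(n,1)$. Everything else is essentially automatic from the hypothesis $l(M) \leq n - 2$, which provides precisely the slack $\rank - l \geq 3$ needed both to produce $L$ and to apply Nikulin's uniqueness result.
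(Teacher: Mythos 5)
Your argument is correct and is essentially the paper's proof, which simply defers to Nikulin's existence and uniqueness theorems (the paper cites Nikulin's results and \cite[Lemma 2.3]{Wan19} for exactly this genus-theoretic argument: realize $q_M$ by a positive definite $L$ of rank $n-1$, then identify $M$ with $U\oplus L$ by uniqueness in the genus). One small remark: since $l(A_M)\leq n-2 < n-1=\rank(L)$, Nikulin's Corollary 1.10.2 gives the existence of $L$ outright with the extra $p$-adic conditions vacuous, so your truncation argument for the local conditions is unnecessary.
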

\begin{proof}
It is a direct consequence of Nikulin's results \cite{Nik80} (see e.g. \cite[Lemma 2.3]{Wan19} for a proof).    
\end{proof}

\begin{lemma}\label{lem:maximal}
Let $M$ be a maximal even lattice of signature $(n,2)$ with $n\geq 5$. Then $M$ can be represented as $M=2U\oplus L$ for some even positive definite lattice $L$.    
\end{lemma}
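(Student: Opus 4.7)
The plan is to peel off orthogonal copies of $U$ one at a time, using maximality to force the divisibility of isotropic vectors to equal $1$.

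First I would record the standard dictionary between maximality and anisotropy: an even lattice $M$ is maximal if and only if its discriminant form $q_M : A_M \to \QQ/2\ZZ$ is anisotropic. Indeed, a nonzero isotropic class $\bar v \in A_M$ has a lift $v \in M^*$ with $(v,v) \in 2\ZZ$ and $(v, M) \subseteq \ZZ$, so $M + \ZZ v$ is a proper even overlattice; conversely, any even overlattice $M \subsetneq M'$ is contained in $M^*$ and contributes such a $\bar v$.

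Next, since $M$ is indefinite of rank $n+2 \geq 7 \geq 5$, Meyer's theorem implies that $M \otimes \QQ$ is isotropic, so after clearing denominators and dividing out a gcd I obtain a primitive isotropic vector $e \in M$. The crucial observation is that the divisibility $d = \gcd\{(e,x) : x \in M\}$ must equal $1$: otherwise $e/d \in M^*$ would descend to a nonzero element of $A_M$ of norm $0$ in $\QQ/2\ZZ$, contradicting the anisotropy just established. Choosing $f' \in M$ with $(e,f')=1$ and setting $f = f' - \tfrac{1}{2}(f',f')\,e$ yields a primitive embedding $\latt{e,f} \cong U \hookrightarrow M$. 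Because $U$ is unimodular, this forces the orthogonal decomposition $M = U \oplus M_1$, where $M_1 = U^\perp$ has signature $(n-1,1)$.

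Finally I would iterate. Any even overlattice of $M_1$ extends to an even overlattice of $M = U \oplus M_1$, so $M_1$ inherits maximality. Since $\rank(M_1) = n+1 \geq 6$, Meyer's theorem again applies, and the same argument produces $M_1 = U \oplus L$ with $L = U^\perp \subseteq M_1$ an even lattice of signature $(n-2, 0)$, hence positive definite. Combining gives $M = 2U \oplus L$, as required.

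There is no serious obstacle in this plan; the essential content is that maximality tightly controls the divisibility of isotropic vectors, and Meyer's theorem produces isotropic vectors as long as the rank stays at least $5$. The one step that deserves care is verifying the equivalence between maximality and anisotropy of $q_M$, which then makes the divisibility argument automatic at every stage of the iteration.
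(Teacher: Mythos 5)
Your proof is correct and follows essentially the same route as the paper: find a primitive isotropic vector, use maximality to force its divisibility to be $1$, split off a copy of $U$, and iterate once. The paper simply takes for granted the steps you spell out (Meyer's theorem for the existence of isotropic vectors and the anisotropy of $q_M$ forcing $(c,M)=\ZZ$).
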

\begin{proof}
Let $c$ be a primitive norm zero vector of $M$. Since $M$ is maximal, $(c,M)=\ZZ$, which yields a decomposition $M=U\oplus K$. Since $K$ has signature $(n-1,1)$ and $\rank(K)=n\geq 5$, there is a primitive norm zero vector of $K$ denoted $c_1$. Similarly, $(c_1,K)=\ZZ$ and $K=U\oplus L$ for some $L$. We then obtain the desired decomposition. 
\end{proof}

\begin{lemma}\label{lem:exponent}
Let $M$ be an even lattice of signature $(n,2)$ with $n\geq 8$. There exists an even overlattice $M_1$ of $M$ satisfying the following conditions
\begin{enumerate}
    \item $M_1$ can be represented as $2U\oplus L$;
    \item $A_M$ and $A_{M_1}$ have the same exponent, i.e. $e(M)=e(M_1)$; 
    \item the length of $A_{M_1}$ satisfies that $l(M_1)\leq 5$. 
\end{enumerate}
\end{lemma}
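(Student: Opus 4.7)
The plan is to construct $M_1$ as an even overlattice of $M$ that is maximal subject to the constraint $e(M_1) = e(M)$, and then to verify the three conditions.

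Even overlattices of $M$ correspond bijectively to isotropic subgroups $H\subseteq A_M$, via $M_1 = \{x\in M' : x+M\in H\}$ with $A_{M_1} = H^\perp/H$. The set of such $H$ satisfying $e(H^\perp/H) = e(M)$ is non-empty (it contains $H=0$) and finite, so it has a maximal element; take $M_1$ to be the corresponding overlattice. Condition (2) then holds by construction.

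For (3), I would decompose $A_{M_1} = \bigoplus_p A_{M_1,p}$ into its $p$-primary parts and write $p^{a_p}$ for the exponent of the $p$-part. The maximality of $M_1$ is equivalent to the statement that for every nonzero isotropic $\bar v \in A_{M_1,p}$ the form induced on $\bar v^\perp/\langle\bar v\rangle$ has exponent strictly less than $p^{a_p}$. Combined with the Jordan decomposition of the $p$-adic discriminant form, this rules out nontrivial isotropic vectors in the "low-order" layers of $A_{M_1,p}$. For odd $p$, anisotropic quadratic forms over $\F_p$ have dimension at most $2$ by Chevalley--Warning, forcing $l(A_{M_1,p})\leq 2$. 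For $p=2$, the corresponding $2$-adic bound on anisotropic pieces together with the one extra generator possibly needed in the top Jordan block to carry the exponent $2^{a_2}$ gives $l(A_{M_1,2})\leq 5$. Since $l(M_1) = \max_p l(A_{M_1,p})$, this yields $l(M_1)\leq 5$, proving (3).

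For (1), $M_1$ is indefinite of rank $n+2\geq 10$ with $l(M_1)\leq 5\leq (n+2)-3$. The signature $(n,2)$ analogue of Lemma~\ref{lem:U}, which follows from the same Nikulin splitting argument, gives $M_1 \cong U\oplus K$ with $K$ of signature $(n-1,1)$. Now $l(K) = l(M_1)\leq 5\leq (n-1)-2$, so Lemma~\ref{lem:U} applies to $K$ and yields $K \cong U\oplus L$ for some even positive definite $L$, so $M_1\cong 2U\oplus L$.

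The main obstacle I expect is the length bound at the prime $p=2$: the $\QQ_2/2\ZZ$-valued discriminant form carries finer invariants than at odd primes and admits anisotropic pieces of rank up to $4$, so an explicit analysis of the interaction between isotropic vectors across different Jordan levels will be required to secure the bound $5$.
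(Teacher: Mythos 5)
Your overall strategy (pass to an overlattice maximal among those preserving the exponent, bound the length of what remains, then split off $2U$ via Nikulin's criterion) has the right shape, and steps (1) and (2) are fine granted (3). But your proof of (3) has a genuine gap, and one intermediate claim is false. For odd $p$ you argue that maximality forces $l(A_{M_1,p})\leq 2$ because anisotropic quadratic forms over $\F_p$ have dimension at most $2$. This conflates ``$A_{M_1,p}$ has no nonzero isotropic vector'' with the weaker condition that maximality actually gives you, namely ``every nonzero isotropic vector strictly lowers the exponent.'' These are not the same. For example, take $A_{M_1,3}=(\ZZ/3)^2\oplus \ZZ/9$ with $q(x,y,a)=\tfrac{2}{3}(x^2+y^2)+\tfrac{2u}{9}a^2$ modulo $2\ZZ$, $u$ a unit: clearing denominators shows every nonzero isotropic element has the form $(0,0,3a')$, and quotienting by such an element yields $(\ZZ/3)^2$, dropping the exponent from $9$ to $3$. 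So this length-$3$ group survives your maximality condition and $l(A_{M_1,p})\leq 2$ fails. (A bound of $5$ may still hold at odd $p$, but your argument does not establish it; Chevalley--Warning only sees the elementary layer of the Jordan decomposition.)

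The larger gap is at $p=2$, which you yourself flag as ``the main obstacle I expect'': you assert that the $2$-adic analogue ``gives $l(A_{M_1,2})\leq 5$'' without any argument, and as the odd-$p$ example shows, the relevant objects are not the anisotropic discriminant forms but the forms all of whose isotropic vectors lower the exponent --- a strictly larger class whose analysis requires tracking interactions between Jordan blocks at different levels. That analysis is precisely the content of the result the paper invokes: its proof of this lemma is a one-line citation to Lemma 1.7 of \cite{Ma18} and its proof, where the prime-by-prime reduction is carried out with the correct bookkeeping. As it stands, your proposal reduces the lemma to the one claim that actually needs proving.
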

\begin{proof}
This follows from \cite[Lemma 1.7]{Ma18} and its proof.     
\end{proof}

\begin{lemma}\label{lem:2-roots}
Let $L$ be an even positive definite lattice of rank $\rank(L)$. If the $2$-component of $A_L$ has length $l(A_L)_2 \leq \rank(L)-3$ and the $p$-component of $A_L$ has length $l(A_L)_p\leq \rank(L)-2$ for any odd prime $p$, then there is a class in the genus of $L$ which has $2$-roots.
\end{lemma}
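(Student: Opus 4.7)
The strategy is to apply Nikulin's theory of discriminant forms and primitive embeddings \cite{Nik80} to produce a lattice $L'$ in the genus of $L$ that admits a primitive embedding of $A_1=\latt{2}$. The $A_1$ sublattice then contributes the desired $2$-root.

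By \cite[Proposition 1.15.1]{Nik80}, such primitive embeddings into lattices in the genus of $L$ correspond to gluing data consisting of an even positive definite lattice $K$ of rank $\rank(L)-1$ together with an isotropic subgroup $H\subset A_{A_1}\oplus A_K$ satisfying $H\cap A_{A_1}=0$ and $H^\perp/H\cong q_L$. I would first try the split case $H=0$: this requires $q_{A_1}$ to be an orthogonal Jordan summand of $q_L^{(2)}$, in which case $q_L=q_{A_1}\oplus q'$ and we take $q_K=q'$. When this fails, I would instead employ a non-trivial gluing $H\cong \ZZ/2$, generated by a pair $(\bar v,\bar w)$ with $\bar v$ the nonzero element of $A_{A_1}$ and $\bar w\in A_K$ of order $2$ satisfying $q_K(\bar w)\equiv 3/2\pmod{2\ZZ}$; the residual form $q_K$ is then pinned down by $q_L$ up to these choices. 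The hypothesis $l(A_L)_2\leq \rank(L)-3$, which is strictly stronger than at odd primes, is precisely what ensures the required element $\bar w$ can be accommodated inside a permissible $q_K$.

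In either case, I would invoke Nikulin's existence theorem \cite[Corollary 1.10.2]{Nik80} to realize $q_K$ as the discriminant form of an even positive definite lattice $K$ of rank $\rank(L)-1$. The relevant length conditions translate to
\[
l(q_K)_p \leq \rank(K)-1 \quad \text{for every prime } p,
\]
with room to spare, and the mod-$8$ signature condition $\mathrm{sign}(q_K)\equiv \rank(K)\pmod 8$ is automatic from the Milgram identity applied to $L$ and $A_1$. Putting these together, $L':=A_1\oplus K$ (or its overlattice glued along $H$) lies in the genus of $L$ and visibly contains a $2$-root.

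The principal obstacle is the second case above: when $q_{A_1}$ is not a direct Jordan summand of $q_L^{(2)}$, one must exhibit an order-$2$ element of the right norm in $A_K$ and simultaneously check that the prescribed $q_K$ is realized by some even positive definite lattice without violating the $2$-adic parity conditions embedded in Nikulin's criterion. This is a delicate case analysis of the $2$-adic Jordan decomposition of $q_L^{(2)}$, and it is exactly the gap between $\rank(L)-3$ in the $2$-part and $\rank(L)-2$ in the odd parts of the hypothesis that provides the slack needed to carry it out.
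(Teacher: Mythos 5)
There is a genuine gap: your argument is a strategy outline whose crux is left unexecuted. In the split case ($q_{A_1}$ an orthogonal Jordan summand of $q_L^{(2)}$) the construction is fine but rarely applies --- for instance it fails outright whenever $\det L$ is odd. Everything therefore rests on the non-trivial gluing $H\cong\ZZ/2\ZZ$, and there you only assert that the hypothesis $l(A_L)_2\leq \rank(L)-3$ ``ensures the required element $\bar w$ can be accommodated inside a permissible $q_K$,'' before conceding that this requires ``a delicate case analysis of the $2$-adic Jordan decomposition'' which you do not carry out. But that is precisely the content of the lemma: one must exhibit a finite quadratic form $q_K$ containing an order-$2$ element $\bar w$ with $q_K(\bar w)\equiv 3/2 \pmod{2\ZZ}$ such that $H^\perp/H\cong q_L$ (note $q_K$ is \emph{not} simply $q_L\oplus(-q_{A_1})$ here), and then verify Nikulin's realization conditions for $q_K$ in rank $\rank(L)-1$, including the extra $2$-adic conditions that can intervene. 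None of this is done, so the proof is incomplete exactly where it matters.

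For comparison, the paper avoids all discriminant-form surgery. It sets $M=U\oplus L$ and takes $v=e+f$ with $(v,v)=2$; the orthogonal complement $M_v=\ZZ(e-f)\oplus L$ has signature $(\rank(L),1)$ and length $l(M_v)\leq l(A_L)+1\leq\rank(L)-2$ (the $+1$ in the $2$-part is why the hypothesis is one stronger at $p=2$), so by Lemma \ref{lem:U} it splits as $U\oplus L_0$. Since $M$ is an even overlattice of $U\oplus(L_0\oplus\ZZ v)$ and $U$ is unimodular, $M\cong U\oplus T$ for an even overlattice $T$ of $L_0\oplus\ZZ v$; then $v\in T$ is a $2$-root and $T$ lies in the genus of $L$. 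All the $2$-adic bookkeeping you defer is absorbed into the single application of the hyperbolic splitting lemma. If you want to salvage your route you must actually perform the Jordan-block case analysis; as written, the claim that the hypotheses provide ``the slack needed to carry it out'' is an expression of hope, not a proof.
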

\begin{proof}
Recall that $U=\ZZ e + \ZZ f$ with $e^2=f^2=0$ and $(e,f)=1$.
We define $M=U\oplus L$. Let us fix $v=e+f$ and $u=e-f$. Note that $v^2=2$ and $u^2=-2$. The orthogonal complement of $v$ in $M$ has the form $M_v=\ZZ u \oplus L$, so it has signature $(\rank(L),1)$. By assumptions, we have
\begin{align*}
&l(A_{M_v})_2 = l(A_L)_2 + 1 \leq \rank(L)-2,\\
&l(A_{M_v})_p = l(A_L)_p \leq \rank(L)-2, \quad \text{for any odd prime $p$}.
\end{align*}
Therefore, $l(M_v)\leq \rank(L)-2$. 
By Lemma \ref{lem:U}, there exists an even positive definite lattice $L_0$ such that $M_v=U\oplus L_0$. Since $U\oplus L_0\oplus \ZZ v$ has an even overlattice isomorphic to $M$, there exists an even overlattice $T$ of $L_0\oplus \ZZ v$ satisfying $M\cong U\oplus T$. By construction, $v\in T$, so $T$ has $2$-roots. Thus $T$ gives a desired class in the genus of $L$.
\end{proof}

We recall some basic properties of $2$-elementary lattices. An even lattice $M$ is called \textit{$2$-elementary} if $A_M\cong (\ZZ / 2\ZZ)^a$ for some non-negative integer $a$. The genus of a $2$-elementary lattice is described by Nikulin \cite[Theorem 3.6.2]{Nik80}. In particular, we have the following.

\begin{lemma}\label{lem:2-elementary}
Let $M$ be a $2$-elementary even lattice of signature $(n,2)$ with $n\geq 3$. Suppose that $A_M \cong (\ZZ / 2\ZZ)^a$ for some non-negative integer $a$.  Then the following holds.
\begin{enumerate}
    \item $a\leq n+2$ and $n+a$ is even.
    \item There are at most two distinct $M$ up to isomorphism when $n$ and $a$ are fixed. 
    \item When $n, a$ are fixed and $4$ does not divide $n-2$, $M$ is unique up to isomorphism if it exists. 
\end{enumerate}
\end{lemma}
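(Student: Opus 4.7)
The plan is to reduce all three claims to Nikulin's classification of indefinite $2$-elementary even lattices (Theorem 3.6.2 of \cite{Nik80}), which asserts that such a lattice is determined up to isomorphism by its signature together with the pair $(a,\delta)$, where $\delta\in\{0,1\}$ is Nikulin's parity invariant: $\delta=0$ means that the discriminant quadratic form $q_M\colon A_M\to\QQ/2\ZZ$ takes values only in $\ZZ/2\ZZ$, and $\delta=1$ otherwise. Granted this, part (2) is immediate: with $(n,a)$ fixed only $\delta$ can vary, so there are at most two isomorphism classes.

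For part (1), the bound $a\le\rank(M)=n+2$ is clear because the discriminant group of any lattice of rank $r$ can be generated by at most $r$ elements. For the parity, let $G$ be a Gram matrix of $M$. Since $M$ is even, $G$ has all diagonal entries in $2\ZZ$, so $G\bmod 2$ is a symmetric $\F_2$-matrix with zero diagonal, i.e.\ an alternating form, and hence has even rank. This rank equals the number of odd invariant factors of $G$, which for a $2$-elementary lattice with $A_M\cong(\ZZ/2\ZZ)^a$ is exactly $(n+2)-a$. Therefore $(n+2)-a$, and hence $n+a$, is even.

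For part (3), I would unpack the existence criteria in Theorem 3.6.2 of \cite{Nik80} for the pair $(a,\delta)$ with signature $(n,2)$. These criteria impose congruences on the signature invariant $t_+-t_-=n-2$ modulo $4$ (and in some regimes modulo $8$), and the relevant congruences differ between $\delta=0$ and $\delta=1$. A case analysis shows that when $4\nmid n-2$ at most one value of $\delta$ is admissible for the prescribed $(n,a)$, so the lattice, if it exists, must be unique.

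The main obstacle is the bookkeeping in part (3): one must verify that Nikulin's realizability conditions genuinely pick out a single $\delta$ in every nonzero residue class of $n-2$ modulo $4$, including the edge cases with small $a$ where additional congruences modulo $8$ intervene. This is mechanical once the classification theorem is invoked, but demands careful attention to each sub-case.
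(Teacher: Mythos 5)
Your proposal is correct and follows the same route as the paper, which states this lemma without proof as a direct consequence of Nikulin's Theorem 3.6.2: parts (1) and (2) follow from the existence conditions $a\le t_++t_-$, $t_++t_-+a\equiv 0\pmod 2$ and the fact that an indefinite even $2$-elementary lattice is determined by $(t_+,t_-,a,\delta)$, and part (3) follows because the condition ``$\delta=0$ implies $t_+-t_-\equiv 0\pmod 4$'' already forces $\delta=1$ whenever $4\nmid n-2$, so the mod $8$ bookkeeping you worry about in the edge cases is not actually needed. Your elementary Gram-matrix argument for the parity in (1) (alternating forms over $\F_2$ have even rank) is a valid self-contained alternative to quoting Nikulin's condition there.
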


We now give some lemmas about $2$-reflective modular forms and $2$-reflective lattices.

\begin{lemma}[Lemma 2.3 in \cite{Ma17}]\label{Lem:reductionMa}
If $M$ is $2$-reflective, then any even overlattice of $M$ is also $2$-reflective.  If $M$ is not $2$-reflective, neither is any finite-index sublattice
of $M$.
\end{lemma}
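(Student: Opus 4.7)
The two assertions are contrapositives of each other: any finite-index sublattice $N \subset M$ has $M$ as an even overlattice, so if the first half holds and $N$ were $2$-reflective, then $M$ would be too. Thus my plan is to prove only the first statement, that every even overlattice $M_1$ of a $2$-reflective lattice $M$ is itself $2$-reflective.

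The approach is to produce a $2$-reflective form for $M_1$ directly from one for $M$. Let $F$ be a $2$-reflective modular form for some finite-index subgroup $\Gamma < \Orth^+(M)$. Since $M$ and $M_1$ span the same rational quadratic space, $\cA(M) = \cA(M_1)$ and $\cD(M) = \cD(M_1)$, while $\Orth^+(M)$ and $\Orth^+(M_1)$ are arithmetic subgroups of the same real orthogonal group $\Orth^+(M \otimes \RR)$, hence mutually commensurable. Therefore $\Gamma_1 := \Gamma \cap \Orth^+(M_1)$ has finite index in $\Orth^+(M_1)$, and $F$ is automatically a modular form for $\Gamma_1$ of the same weight (with the restricted character).

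It then remains to check that $F$ remains $2$-reflective, i.e.\ that its zero divisor is supported on quadratic divisors $l^\perp$ for $l \in M_1$ of norm $2$. Because the domain is unchanged, the zero divisor is the same as before, so it suffices to verify that every $2$-root $l$ of $M$ is still a $2$-root of $M_1$. Norm-$2$ vectors are primitive in any even lattice (a hypothetical factorisation $l = k l'$ with $l' \in M_1$ forces $k^2 \mid 2$), so the primitivity is preserved under $M \hookrightarrow M_1$. For the reflection $\sigma_l(x) = x - (l,x)\,l$ to preserve $M_1$, I need $(l,x) \in \ZZ$ for every $x \in M_1$; this follows from the chain $M \subset M_1 \subset M_1' \subset M'$ characterising even overlattices, which gives $(l,x) \in (M, M') = \ZZ$. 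The second assertion then drops out by contraposition.

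The main obstacle, to the extent there is one, is exactly this last step: checking that $2$-roots lift through the overlattice inclusion, which is the only place the even overlattice hypothesis (rather than an arbitrary superlattice) is genuinely used. The commensurability of the two orthogonal groups is a standard arithmetic fact, and the rest of the argument is a formal matching of definitions, so I do not anticipate any substantive difficulty beyond this.
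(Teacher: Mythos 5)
Your proof is correct: the paper itself gives no argument here, simply citing \cite[Lemma 2.3]{Ma17}, and the proof there is essentially the one you reconstruct — the domains $\cD(M)=\cD(M_1)$ coincide, the orthogonal groups are commensurable, and a norm-$2$ vector of $M$ remains a primitive norm-$2$ vector of the even overlattice $M_1$ whose reflection $x\mapsto x-(l,x)l$ preserves $M_1$ by integrality. The second assertion follows by contraposition exactly as you say, since $M$ is an even overlattice of any of its finite-index sublattices.
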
 

\begin{lemma}[Lemma 5.2 in \cite{Wan19}]\label{lem:pullback}
Let $M$ be an even lattice of signature $(n,2)$ with $n\geq 3$ and $L$ be an even positive definite lattice. If $M\oplus L$ is $2$-reflective, then $M$ is also $2$-reflective.    
\end{lemma}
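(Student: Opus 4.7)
The plan is a quasi-pullback argument. Let $F$ be a $2$-reflective modular form on some finite-index subgroup of $\Orth^+(M\oplus L)$, with divisor $\sum_l n_l\,l^\perp$ indexed by $2$-roots $l\in M\oplus L$ modulo $\pm 1$. The orthogonal splitting realizes $\mathcal{D}(M)$ as the sub-domain of $\mathcal{D}(M\oplus L)$ where the $L$-component of $\mathcal{Z}$ vanishes, and the divisors $l^\perp$ containing this sub-domain are exactly those with $l\in L$. Since $L$ is even and positive definite, the set $R_2(L)$ of $2$-roots in $L$ is finite.

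Next I would form the quasi-pullback
$$
\tilde F(\mathcal{Z})\;=\;\biggl(\,\prod_{l_0\in R_2^+(L)}(\mathcal{Z},l_0)^{-n_{l_0}}F(\mathcal{Z})\biggr)\bigg|_{\mathcal{D}(M)},
$$
where $R_2^+(L)$ picks one element from each pair $\{\pm l_0\}$. Each factor $(\mathcal{Z},l_0)$ vanishes to order one along $\mathcal{D}(M)$, and the hyperplanes $l_0^\perp$ meet $\mathcal{D}(M)$ transversely off a lower-dimensional set, so the vanishing order of $F$ along $\mathcal{D}(M)$ is exactly $\sum n_{l_0}$. Hence the bracketed expression is holomorphic and does not vanish identically on $\mathcal{D}(M)$, and $\tilde F$ is a nonzero holomorphic modular form on $\mathcal{D}(M)$ for a finite-index subgroup of $\Orth^+(M)$, induced from the stabilizer of $M\subset M\oplus L$ (extended by the identity on $L$).

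It remains to check that $\tilde F$ is $2$-reflective. A general $2$-root of $M\oplus L$ decomposes as $l=m+l_0$ with $m\in M$, $l_0\in L$, and $(m,m)+(l_0,l_0)=2$; the restriction of $l^\perp$ to $\mathcal{D}(M)$ is $\{(\mathcal{Z},m)=0\}$. If $l_0=0$ then $m$ is a $2$-root of $M$, contributing a permitted quadratic divisor. Otherwise $(l_0,l_0)\geq 2$ because $L$ is even positive definite, forcing $(m,m)\leq 0$; the subcase $m=0$ has already been removed by the division, and for $m\neq 0$ with $(m,m)\leq 0$ a standard signature count shows $m^\perp\cap\mathcal{D}(M)=\emptyset$, since the type IV domain requires a two-dimensional negative-definite subspace which does not fit inside $m^\perp$. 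Consequently the divisor of $\tilde F$ is supported on $2$-root divisors of $M$, so $M$ is $2$-reflective. The main obstacle is the non-vanishing of the quasi-pullback, which follows from the transversality and signature arguments above.
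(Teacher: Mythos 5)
The paper does not reprove this lemma---it is quoted directly from \cite[Lemma 5.2]{Wan19}---and your quasi-pullback argument (restrict to $\cD(M)\subset\cD(M\oplus L)$, divide out the finitely many linear factors $(\mathcal{Z},l_0)$ for $2$-roots $l_0\in L$, then check via the signature count that only divisors $m^\perp$ with $(m,m)=2$, $m\in M$, survive) is exactly the standard proof of this statement and the one used in \cite{Wan19}. Your reasoning is correct, including the key points that the surviving quasi-pullback is holomorphic and not identically zero and that $m^\perp\cap\cD(M)=\emptyset$ when $(m,m)\leq 0$ with $m\neq 0$.
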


We now introduce a particular class of $2$-reflective modular forms. A modular form for $\Orth^+(M)$ is called \textit{complete $2$-reflective} if its zero divisor is a linear combination of all quadratic divisors orthogonal to $2$-roots with multiplicity one. An even lattice is called \textit{complete $2$-reflective} if it has a complete $2$-reflective modular form. 

\begin{lemma}[Lemma 4.1 in \cite{Wan21}]\label{lem:complete}
Let $M=U\oplus U(m)\oplus L$. If $M$ is complete $2$-reflective then any even overlattice of $M$ is also complete $2$-reflective.    
\end{lemma}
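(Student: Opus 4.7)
The plan is to promote the complete $2$-reflective form on $M$ to one on $M_1$ through the Borcherds-product correspondence. The hypothesis $M=U\oplus U(m)\oplus L$ is used to invoke Bruinier's converse theorem \cite{Bru14}: the complete $2$-reflective form $\Phi$ on $M$ arises as a Borcherds lift $\Phi=\Psi(f)$, where $f=\sum c(n,\gamma)q^n e_\gamma$ is a weakly holomorphic vector-valued modular form of weight $(2-\rank M)/2$ for the Weil representation $\rho_M$ attached to $A_M$. Since $\operatorname{div}(\Phi)$ equals the Heegner divisor $H(1,0)=\sum_{l\in M,\,l^2=2}l^\perp$ with multiplicity one, comparing with the Borcherds divisor formula forces the principal part of $f$ to be exactly $q^{-1}e_0$: the coefficient $c(-1,0)=1$ and every other $c(n,\gamma)$ with $n<0$ vanishes.

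Let $M_1$ be an even overlattice of $M$ and set $H:=M_1/M$, an isotropic subgroup of $A_M$ with $A_{M_1}\cong H^\perp/H$. I would invoke the standard ``down-arrow'' map between vector-valued modular form spaces
$$
\downarrow_H\colon f\longmapsto f_1,\qquad f_1(\mu)=\sum_{\substack{\gamma\in H^\perp\\ \gamma+H=\mu}}f(\gamma),
$$
which is a well-defined weight-preserving homomorphism from weakly holomorphic modular forms for $\rho_M$ to those for $\rho_{M_1}$ (Scheithauer; Bruinier--Bundschuh). Applied to our $f$, the only polar coefficient that can contribute is the one at $\gamma=0$, which lies in $H\subset H^\perp$ and projects to $\mu=0\in A_{M_1}$; all other polar coefficients of $f$ are zero. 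Hence $f_1$ again has principal part $q^{-1}e_0$, now in the representation $\rho_{M_1}$.

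The Borcherds lift $\Psi(f_1)$ is then a holomorphic modular form on $\Orth^+(M_1)$ whose divisor is the Heegner divisor $H_{M_1}(1,0)=\sum_{l\in M_1,\,l^2=2}l^\perp$ of the overlattice, each component appearing with multiplicity one. This is precisely the locus of all $2$-roots of $M_1$, so $\Psi(f_1)$ is a complete $2$-reflective modular form on $M_1$. The main technical obstacle is the well-definedness and weight-preservation of the down-arrow map on the Weil-representation side; this is standard, but it relies crucially on the splitting hypothesis $M\cong U\oplus U(m)\oplus L$, which is what permits Bruinier's converse theorem to furnish the vector-valued antecedent $f$ in the first place and thereby transfer the complete $2$-reflective structure through to every overlattice.
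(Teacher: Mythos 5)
Your argument is correct and is essentially the proof of the cited result \cite[Lemma 4.1]{Wan21} (the paper itself offers no proof, only the citation): Bruinier's converse theorem for lattices splitting $U\oplus U(m)$ produces the vector-valued input $f$ with principal part $q^{-1}e_0$, and the isotropic-descent map $\downarrow_H$ transports it to a form for $\rho_{M_1}$ whose Borcherds lift is the complete $2$-reflective form on the overlattice. The only step worth spelling out is that reading off the principal part ``exactly $q^{-1}e_0$'' from the divisor uses the non-negativity of the polar coefficients together with the non-emptiness of all relevant Heegner divisors, both of which are guaranteed by the splitting of $U\oplus U(m)$.
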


\begin{lemma}\label{lem:non-complete}
Let $M=2U\oplus L$ be a $2$-reflective lattice. If $M$ is not complete $2$-reflective, then there exists an even lattice $K$ such that $M\cong A_1\oplus K$.   
\end{lemma}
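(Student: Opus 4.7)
\emph{Proof plan.}
I would prove the contrapositive: if $M = 2U \oplus L$ has no orthogonal $A_1$ summand, then every $2$-reflective modular form on $M$ is in fact complete $2$-reflective.

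The first step is to classify the $\Orth^+(M)$-orbits of $2$-roots via Eichler's criterion, which applies because $M \supset 2U$: two $2$-roots lie in the same orbit iff they share the divisibility $\d(l) := \gcd((l,M))$ and the renormalized vector $l/\d(l)$ determines the same class in $A_M$. Since $(l,l)=2$ forces $\d(l) \in \{1,2\}$, the $2$-roots split into \emph{Case A} ($\d(l)=1$, class $0 \in A_M$) and \emph{Case B} ($\d(l)=2$, class $[l/2] \in A_M$ with $q([l/2]) \equiv \tfrac{1}{2} \bmod \ZZ$). The decisive observation is that a Case B $2$-root exists in $M$ if and only if $M$ has an orthogonal $A_1$ summand: a Case B vector $l$ satisfies $(l,M) = 2\ZZ$, so $M = \ZZ l \oplus (l^\perp \cap M) \cong A_1 \oplus K$, and conversely the generator of any $A_1$ summand is a Case B $2$-root. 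Because $M$ always contains $2U$, Case A $2$-roots are always present (e.g.\ $e_1+f_1$), so under the hypothesis all $2$-roots of $M$ form a single $\Orth^+(M)$-orbit.

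I would then invoke Borcherds' correspondence to express the given $2$-reflective form as a Borcherds product $F = \Borch(f)$, where $f$ is a weakly holomorphic vector-valued modular form of weight $(2-\rank L)/2$ for the Weil representation $\rho_L$. Since the unique $2$-root orbit is of Case A type, the negative-exponent part of $f$'s principal part (modulo terms whose Heegner divisors are empty) reduces to $c \cdot q^{-1}e_0$ for a positive integer $c$ equal to the multiplicity of the orbit in the zero divisor of $F$. By the Borcherds--Bruinier obstruction theorem the set of attainable principal parts is a $\QQ$-linear subspace cut out by pairings with cusp forms of the dual weight; hence $q^{-1}e_0$ is itself attainable, and a standard integrality argument yields $f_0$ with integer Fourier coefficients and principal part $q^{-1}e_0$. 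The Borcherds lift $\Borch(f_0)$ is then a modular form on $M$ whose zero divisor is $\sum_l l^\perp$ over all $2$-roots $l$ with multiplicity one, i.e.\ a complete $2$-reflective form. This contradicts the hypothesis, so $M$ must split off $A_1$.

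The main obstacle is the integrality step: $\QQ$-linearity of the Borcherds obstruction only delivers $q^{-1}e_0$ as a rational combination, and producing an integer-coefficient $f_0$ with this principal part requires that the arithmetic sublattice of weakly holomorphic forms with integral principal parts be saturated at this vector. An alternative route, used elsewhere in the subject, is to exploit the simple connectedness of $\cD(M)$ to extract a $c$-th root of $F$ as a modular form for a finite cover of $\Orth^+(M)$, but this needs separate care to land within the class of forms admitted by the paper.
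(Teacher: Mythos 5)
Your argument is essentially the paper's: its proof of this lemma consists precisely of your first paragraph, citing [GHS09, Proposition 3.3] (Eichler's criterion, available since $M\supset 2U$) for the transitivity of $\Orth^+(M)$ on $2$-roots of divisibility one, and [GHS13, Lemma 7.5] for the orthogonal splitting $M=\ZZ v\oplus M_v\cong A_1\oplus K$ off a $2$-root $v$ with $(v,M)=2\ZZ$. The analytic step you elaborate at length --- passing from a $2$-reflective form with divisor $m\sum_l l^\perp$ over the single orbit to a complete $2$-reflective form with multiplicity one --- is left implicit in the paper's two-line proof but is exactly the argument carried out in the proof of Theorem 3.1, where Bruinier's converse theorem [Bru14, Corollary 1.3] writes $F$ as a Borcherds product whose principal part has the integer multiplicities as coefficients and extracts $F_1$ with $F=F_1^m$; this is also how the paper disposes of the integrality/saturation obstacle you flag, so your proposal and the published proof stand or fall together on that technical point.
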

\begin{proof}
By assumptions, there exists a $2$-root $v$ of $M$ with $(v,M)=2\ZZ$, because the set of $2$-roots $u\in M$ with $(u,M)=\ZZ$ is transitive under the action of $\Orth^+(M)$ (see \cite[Proposition 3.3]{GHS09}). We conclude from \cite[Lemma 7.5]{GHS13} that $M = \ZZ v \oplus M_v$, where $M_v$ is the orthogonal complement of $v$ in $M$.  We then prove the lemma.  
\end{proof}

\section{A proof of Theorem \ref{MTH0}}\label{sec:Ma}
Ma \cite{Ma17} proved that the set of $2$-reflective lattices of signature $(n,2)$ with $n\geq 7$ is finite. We improve Ma's result by a new method. 

\begin{theorem}
There are only finitely many $2$-reflective lattices of signature $(n,2)$ with $n\geq 5$.     
\end{theorem}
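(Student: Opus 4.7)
Since Ma's theorem \cite{Ma17} already settles the finiteness claim for $n \geq 7$, I only need to treat $n \in \{5, 6\}$. The strategy is to exploit Part (b) of the Gritsenko--Nikulin conjecture, proved by Looijenga \cite{Loo03}, to reduce the question to the Nikulin--Vinberg finiteness theorem for $2$-reflective hyperbolic lattices.

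Let $M$ be a $2$-reflective lattice of signature $(n,2)$, $n\in\{5,6\}$. By Lemma~\ref{Lem:reductionMa} the maximal even overlattice $\widetilde{M}$ is $2$-reflective, and by Lemma~\ref{lem:maximal} it decomposes as $\widetilde{M} \cong 2U \oplus L$ for some even positive-definite lattice $L$ of rank $n - 2$. Let $F$ be a $2$-reflective modular form for $\Orth^+(\widetilde{M})$, which exists via the symmetrization trick. Being non-constant and $2$-reflective, $F$ vanishes on $l^\perp$ for some $2$-root $l\in\widetilde{M}$. Using the $2U$ summand together with Eichler's criterion, the $\Orth^+(\widetilde{M})$-orbit of $l$ contains a $2$-root orthogonal to a primitive isotropic vector $c\in\widetilde{M}$ of divisor $(c,\widetilde{M})=\ZZ$ (for instance, after moving $l$ into $U_1 \oplus L$, take $c = e_2$). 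Replacing $l$ by this representative, the image of $l$ in the hyperbolic lattice $S := c^\perp_{\widetilde{M}}/\ZZ c$ is a $2$-root, and $F$ vanishes on its quadratic divisor.

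Part (b) of the Gritsenko--Nikulin conjecture \cite[Corollary 5.11]{Loo03} now applies and shows that $S$ is a $2$-reflective hyperbolic lattice. Since $(c,\widetilde{M})=\ZZ$ we also obtain the orthogonal splitting $\widetilde{M} \cong U \oplus S$, of rank $n+1 \geq 6$. By the Nikulin--Vinberg theorem the set of $2$-reflective hyperbolic lattices of rank $\geq 3$ is finite up to isomorphism, so only finitely many $S$ occur, hence only finitely many isomorphism classes of $\widetilde{M}$, hence of $L$.

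To obtain the finiteness statement for $M$ itself, it remains to show that for each maximal $\widetilde{M}$ only finitely many $2$-reflective finite-index sublattices $M \subset \widetilde{M}$ can occur, and this is the step I expect to be the main obstacle. The plan is to bound the index $[\widetilde{M} : M]$ by exploiting the fact that the existence of a $2$-reflective form on $\Orth^+(M)$ imposes strong constraints on the discriminant form $(A_M, q_M)$ through Borcherds' singular theta correspondence and Bruinier's converse theorem \cite{Bru02, Bru14}, which in turn bounds the level of $M$ and hence the index. Combining this bound with the structural description $\widetilde{M} = 2U \oplus L$ from \cite{Wan19} should reduce the remaining possibilities to a finite list, completing the proof.
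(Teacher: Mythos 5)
Your first half is sound and runs parallel to the paper's own strategy: pass to an overlattice splitting $2U$, use Eichler's criterion to place a $2$-root divisor of $F$ over a primitive isotropic vector, invoke Looijenga's theorem (Part (b) of the Gritsenko--Nikulin conjecture) to get a $2$-reflective hyperbolic lattice $U\oplus L$, and conclude finiteness of the overlattices from Nikulin--Vinberg. The problem is that you have reduced to the \emph{maximal} even overlattice $\widetilde{M}$, and a maximal lattice has infinitely many finite-index even sublattices; so the step you defer to the end --- bounding $[\widetilde{M}:M]$ --- is not a technicality but the entire content of the theorem for $n=5,6$, and your sketch of it does not work as stated. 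Bruinier's converse theorem tells you that a reflective form on a lattice splitting $U\oplus U(m)$ is a Borcherds product; it does not by itself bound the level or determinant of a lattice admitting a $2$-reflective form. The effective bound that exists in the literature (Ma's \cite[Corollary 1.10]{Ma18}) applies only to reflective forms with \emph{simple} zeros, which is exactly why the paper cannot avoid a case distinction at this point.

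The paper closes this gap differently: instead of the maximal overlattice, it uses \cite[Lemma 4.8]{Ma17} to produce an overlattice $M_1$ of $M$ with $l(A_{M_1})_2\leq 4$, $l(A_{M_1})_p\leq 3$, and crucially $e(M_1)=e(M)$ or $e(M)/2$. Because $M_1$ remembers the exponent of $M$ (and $l(M)\leq n+2$ is automatic), any upper bound on $e(M_1)$ bounds $\det(M)$ and hence leaves finitely many $M$. Passing to the maximal overlattice destroys precisely this exponent information. The paper then bounds $e(M_1)$ by splitting into two cases: if $M_1$ carries a $2$-reflective form with simple zeros, Ma's effective finiteness result applies directly; if not, a Borcherds-product argument forces $F$ to vanish on $v^\perp$ for a $2$-root $v$ with $(v,M_1)=2\ZZ$, which yields $M_1=U\oplus T\oplus A_1$, and only then is Part (b) applied to the hyperbolic lattice $T\oplus A_1$. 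To repair your proof you would need to replace the maximal overlattice by an exponent-preserving one and supply one of these two mechanisms for bounding its exponent; as written, the argument establishes finiteness only of the maximal overlattices $\widetilde{M}$, not of the lattices $M$ themselves.
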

\begin{proof}
We first prove the theorem for $n\geq 7$, which reproves Ma's result.
Let $M$ be a $2$-reflective lattice of signature $(n,2)$ with $n\geq 7$. By \cite[Lemma 4.8]{Ma17} there exists an even overlattice $M_1$ of $M$ with length $l(M_1)\leq 4$ and exponent $e(M_1)=e(M)$ or $e(M)/2$. By Lemma \ref{lem:U} we can write $M_1=2U\oplus L$. Lemma \ref{Lem:reductionMa} yields that $M_1$ is $2$-reflective. Applying Part (b) of the Gritsenko--Nikulin conjecture (proved by Looijenga \cite{Loo03}) or Borcherds' result \cite[Theorem 12.1]{Bor98} to $2U\oplus L$, we find that $U\oplus L$ is a $2$-reflective hyperbolic lattice. Nikulin and Vinverg have proved that there are only finitely many $2$-reflective hyperbolic lattices. Therefore, both the exponents $e(M)$ and $e(M_1)$ are bounded from above. We then prove the desired result. 

We then consider the remaining cases. Let $M$ be a $2$-reflective lattice of signature $(n,2)$ with $n=5$ or $6$. According to \cite[Lemma 4.8]{Ma17}, there exists an even overlattice $M_1$ of $M$ such that $e(M_1)=e(M)$ or $e(M)/2$, $l(A_{M_1})_2\leq 4$ and $l(A_{M_1})_p \leq 3$ for any odd prime $p$. 

If there is a $2$-reflective modular form on $\Orth^+(M_1)$ with simple zeros, then we conclude from \cite[Corollary 1.10]{Ma18} that the number of such $M_1$ is finite up to isomorphism.  Therefore, the exponent $e(M)$ is bounded from above. We then prove the finiteness of $M$. 

Suppose that there is no $2$-reflective modular form on $\Orth^+(M_1)$ with simple zeros. We claim that $M_1$ has a $2$-reflective modular form $F$ which vanishes on some quadratic divisor $v^\perp$, where $v\in M_1$ with $(v,v)=2$ and $(v,M_1)=2 \ZZ$. Otherwise, there would be a modular form on $\Orth^+(M_1)$ whose zero divisor is a linear combination of quadratic divisors $l^\perp$ with some fixed multiplicity $m$, where $l$ takes over $2$-roots of $M_1$ with $(l,M_1)=\ZZ$, because the set of these $l$ is transitive under $\Orth^+(M_1)$. Since $M_1$ splits $U$, by \cite[Corollary 1.3]{Bru14} the modular form $F$ can be constructed as a Borcherds product on some sublattice of $M_1$. Therefore, there exists a modular form $F_1$ with simple zeros such that $F=F_1^m$. This contradicts the assumption. 

The existence of $v$ yields a decomposition $M_1=A_1\oplus K$ for some $K$ with $l(A_K)_p\leq 3$ for any prime $p$. Therefore, we can write $K=U\oplus T$ and thus $M_1=U\oplus T\oplus A_1$. By Part (b) of the Gritsenko--Nikulin conjecture, the hyperbolic lattice $T\oplus A_1$ is $2$-reflective. This implies the finiteness of $M_1$. We then finish the proof.
\end{proof}

\section{A proof of Theorem \ref{MTH}}\label{sec:proof}
In this section we present a proof of Theorem \ref{MTH}. The proof is divided into six cases. 

\begin{theorem}\label{th:n>=13}
The lattices $2U\oplus 3E_8$, $2U\oplus 2E_8\oplus A_1$ and $2U\oplus 2E_8$ are the only $2$-reflective lattices of signature $(n,2)$ with $n\geq 13$.     
\end{theorem}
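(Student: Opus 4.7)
The plan is to pass from $M$ to an overlattice of the form $2U \oplus L$ with controlled invariants using Lemma \ref{lem:exponent}, apply the classification from \cite{Wan19} to pin down $L$, and then recover $M$ itself from this overlattice.

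Given a $2$-reflective lattice $M$ of signature $(n,2)$ with $n \geq 13$, Lemma \ref{lem:exponent} supplies an even overlattice $M_1 = 2U \oplus L$ with $l(M_1) \leq 5$ and $e(M_1) = e(M)$, and Lemma \ref{Lem:reductionMa} shows $M_1$ is again $2$-reflective. Since $\rank L = n - 2 \geq 11$ and $l(A_L) = l(A_{M_1}) \leq 5$, the length inequalities required by Lemma \ref{lem:2-roots} hold, so the genus of $L$ contains a representative $L'$ that possesses $2$-roots. By the standard fact that two positive definite even lattices in the same genus become isomorphic after adjoining $U$ (Eichler--Kneser), we get $2U \oplus L \cong 2U \oplus L'$, so we may assume $L$ itself has $2$-roots. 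Now we invoke the classification from \cite{Wan19}: among the fifty-one $2$-reflective lattices of type $2U \oplus L$ with $L$ possessing $2$-roots, only three satisfy $\rank L \geq 11$, namely $L \in \{3E_8,\ 2E_8 \oplus A_1,\ 2E_8\}$. Therefore $M_1$ is isomorphic to one of the three lattices in the statement, and in particular $n \in \{18, 19, 26\}$.

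The final step is to show $M = M_1$. In the unimodular cases $M_1 \in \{2U \oplus 3E_8,\ 2U \oplus 2E_8\}$ this is immediate: $e(M) = 1$ forces $M$ to be an even unimodular sublattice of full rank inside a unimodular lattice, so $M = M_1$. The remaining case $M_1 = 2U \oplus 2E_8 \oplus A_1$ is more delicate, and this is the main obstacle: here $e(M) = 2$ only says that $A_M$ is $2$-elementary. My approach is first to replace $M$ by its maximal even overlattice $M_{\max}$; by Lemma \ref{lem:maximal}, $M_{\max}$ splits as $2U \oplus L_{\max}$, and the preceding argument forces $M_{\max} = 2U \oplus 2E_8 \oplus A_1$. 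Any proper $2$-elementary sublattice $M \subsetneq M_{\max}$ of full rank satisfies $|A_M| = 2^{2a+1}$ for some $a \geq 1$; since $4 \nmid 17 = n - 2$, Lemma \ref{lem:2-elementary}(3) determines each such $M$ uniquely up to isomorphism. Ruling out $2$-reflectivity for these candidates then proceeds by combining Lemma \ref{lem:non-complete} (if $M$ admits an $A_1$-splitting, descend via Lemma \ref{lem:pullback} to a lower-rank case that no longer appears in the classified list; otherwise any $2$-reflective form on $M$ is complete $2$-reflective, and comparing its divisor with that of the unique $2$-reflective form on $M_{\max}$ should yield a contradiction).
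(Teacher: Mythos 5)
Your reduction is sound for most of its length: passing to an overlattice $M_1=2U\oplus L$ via Lemma \ref{lem:exponent}, replacing $L$ by a genus representative with $2$-roots via Lemma \ref{lem:2-roots} and Eichler's theorem, and reading off $L\in\{3E_8,\,2E_8\oplus A_1,\,2E_8\}$ from the list in \cite{Wan19} is a correct way to pin down $M_1$ and to kill all $n\ge 13$ other than $18,19,26$ in one stroke. This is essentially the method the paper uses for $n\le 12$ (compare Theorem \ref{th:n=12}); for $n\ge 13$ the paper deliberately takes a different route (maximal overlattice, Nikulin's $E_8$-splitting, and the structural result \cite[Theorem 6.2]{Wan19} on root sublattices rather than the full classification table), which is why it can call its proof ``simpler'' and partly independent of that table. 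The unimodular cases of your descent ($e(M)=1$ forces $M=M_1$) are also fine.

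The genuine gap is exactly where you flag it: showing $M=M_{\max}$ when $M_{\max}=2U\oplus 2E_8\oplus A_1$. Uniqueness of each $2$-elementary candidate with $A_M\cong(\ZZ/2\ZZ)^a$ does not rule out $2$-reflectivity, and your parenthetical sketch does not close the case. Lemma \ref{lem:non-complete} applies only to lattices of the form $2U\oplus L$, which you have not established for candidates with large $a$; in the branch where $M=K\oplus A_1$, Lemma \ref{lem:pullback} produces a $2$-reflective $K$ of signature $(18,2)$, and such lattices \emph{do} appear in the classified list ($2U\oplus 2E_8$), so you get no contradiction until you additionally argue that $K$ must be unimodular while $l(K)=a-1\ge 2$; and the ``otherwise'' branch (``comparing divisors \dots should yield a contradiction'') is not an argument. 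The paper closes this step cheaply: if $M\subsetneq M_{\max}$, pick an intermediate even lattice $M_2$ with $M\le M_2<M_{\max}$ and $M_{\max}/M_2$ nontrivial cyclic; then $l(M_2)\le 3$, $M_2$ is $2$-reflective by Lemma \ref{Lem:reductionMa}, and the classification of $2$-reflective lattices of signature $(19,2)$ with length at most $3$ forces $M_2=M_{\max}$, a contradiction. Your own machinery gives the same conclusion (such an $M_2$ splits $2U$ by Lemma \ref{lem:U}, has a genus representative with $2$-roots by Lemma \ref{lem:2-roots}, and the list of \cite{Wan19} has no rank-$17$ entry of determinant $2^3$), but some version of this intermediate-lattice argument must be supplied; as written the proof is incomplete.
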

\begin{proof}
It was proved by Ma \cite[Proposition 3.1]{Ma17} that $2U\oplus 3E_8$ is the unique $2$-reflective lattice of signature $(n,2)$ with $n\geq 26$. 
We now assume that $13\leq n \leq 25$.  

Suppose that $M$ is a maximal even lattice of signature $(n,2)$ and it is $2$-reflective. The length of $A_M$ satisfies that $l(M)\leq 3$. By Nikulin's results \cite[Corollaries 1.10.2 and 1.13.3]{Nik80}, we can write $M=E_8\oplus K$ for some maximal even lattice $K$. By Lemma \ref{lem:maximal}, we can further write $K=2U\oplus L$. Thus we have a decomposition $M=2U\oplus E_8 \oplus L$ with $3\leq \rank(L)\leq 15$.

By \cite[Theorem 6.2]{Wan19}, the sublattice $R$ of $E_8\oplus L$ generated by $2$-roots has the full rank $n-2$. Moreover, we can decompose $R$ into irreducible root lattices of type $ADE$ as
$$
R=E_8 \oplus R_1 \oplus mA_1,
$$
where $m$ is some non-negative integer and $R_1$ is a direct sum of some irreducible root lattices not of type $A_1$ contained in $L$. All irreducible components of $R$ not of type $A_1$ are required to have the same Coxeter number. Therefore, if $R_1$ is not zero, then it has to be $E_8$, because $\rank(R_1)\leq 15$. By the last statement of \cite[Theorem 6.2 (c)]{Wan19}, we have the expression 
$$
E_8\oplus L=2E_8\oplus (n-18)A_1 \quad \text{or} \quad E_8\oplus (n-10)A_1.
$$

In the former case, the assumption that $M$ is maximal forces that $n-18\leq 3$.  When $n=18$, $M=2U\oplus 2E_8$. When $n=19$, $M=2U\oplus 2E_8\oplus A_1$. When $n=20$, by Lemma \ref{lem:2-elementary} we have
$$
M=2U\oplus 2E_8\oplus 2A_1 \cong 2U\oplus E_8\oplus D_{10}.
$$
The second model of $M$ contradicts \cite[Theorem 6.2 (b)]{Wan19}, because $E_8$ and $D_{10}$ have distinct Coxeter numbers. When $n=21$, it follows from Lemma \ref{lem:pullback} that $M= 2U\oplus 2E_8\oplus 3A_1$ is not $2$-reflective.

In the latter case, the assumption that $M$ is maximal forces that $n-10\leq 3$. When $n=13$, Lemma \ref{lem:2-elementary} yields
$$
M=2U\oplus E_8\oplus 3A_1 \cong 2U\oplus E_7 \oplus D_4,
$$
which contradicts \cite[Theorem 6.2 (b)]{Wan19}, because $E_7$ and $D_{4}$ have distinct Coxeter numbers. 

We now consider the general case. Let $M$ be a $2$-reflective lattice of signature $(n,2)$ with $13\leq n\leq 25$. It remains to show that $M$ has to be maximal. 

Suppose that $M$ is not maximal and $M_1$ is a maximal even overlattice of $M$. As a maximal $2$-reflective lattice, $M_1$ has to be $2U\oplus 2E_8\oplus A_1$ or $2U\oplus 2E_8$ by the discussions above. In particular, $n=19$ or $18$. For such $n$, we can adapt the above argument to show that $2U\oplus 2E_8\oplus A_1$ and $2U\oplus 2E_8$ are the only $2$-reflective lattices $M$ of signature $(n,2)$ and length $l(M)\leq 3$. 

We claim that the order of the group $M_1/M$ is not a prime, otherwise the order of $A_M$ would be $2p^2$ or $p^2$. Thus $l(M)\leq 3$, which forces that $M=M_1$, a contradiction. Therefore, there exists an even lattice $M_2$ such that $M< M_2 < M_1$ and $M_1/M_2$ is a nontrivial cyclic group. It follows that $l(M_2)\leq 3$ and thus $M_2=M_1$, a contradiction. We then finish the proof.  
\end{proof}

\begin{theorem}\label{th:n=12}
The lattice $2U\oplus E_8\oplus 2A_1$ is the unique $2$-reflective lattice of signature $(12,2)$.     
\end{theorem}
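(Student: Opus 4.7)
The plan is to follow the template of the proof of Theorem~\ref{th:n>=13}, adapting the length bound to $n=12$. Let $M$ be a $2$-reflective lattice of signature $(12,2)$. Since $n=12\geq 8$, Lemma~\ref{lem:exponent} supplies an even overlattice $M_1=2U\oplus L$ of $M$ satisfying $l(M_1)\leq 5$ and $e(M_1)=e(M)$, and by Lemma~\ref{Lem:reductionMa} this $M_1$ is also $2$-reflective. Invoking the classification of $2$-reflective lattices of the form $2U\oplus L$ in \cite[Theorem 1.2]{Wan19}, the only such lattice of signature $(12,2)$ is $2U\oplus E_8\oplus 2A_1$ (which has length $2\leq 5$). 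Hence $M_1\cong 2U\oplus E_8\oplus 2A_1$ and $e(M)=e(M_1)=2$, so $M$ is $2$-elementary.

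Next, since $4\nmid n-2=10$, Lemma~\ref{lem:2-elementary} says the $2$-elementary even lattice of signature $(12,2)$ with $|A_M|=2^a$ is unique up to isomorphism for each admissible $a$. The sublattice inclusion $M\subseteq M_1$ has quotient $M_1/M$ a $2$-group of order $2^k$, whence $|A_M|=|A_{M_1}|\cdot[M_1:M]^2=4^{k+1}$, so $a=2k+2$. To force $k=0$, I would suppose $k\geq 1$ and choose an even overlattice $M_2$ of $M$ with $M_2\subseteq M_1$ and $[M_1:M_2]=2$; such an $M_2$ exists because $M_1/M$ has a subgroup of index $2$ (take $M_2=M$ if $k=1$). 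Then $M_2$ is $2$-reflective by Lemma~\ref{Lem:reductionMa}, and since it sits between two $2$-elementary lattices via $2$-group quotients, $M_2$ is itself $2$-elementary with $|A_{M_2}|=4\cdot|A_{M_1}|=16$, so $l(M_2)=4$. Because $\rank(M_2)=14\geq l(M_2)+2$ and $M_2$ is indefinite of signature $(12,2)$, Nikulin's splitting theorem \cite{Nik80} splits off a copy of $U$, and then Lemma~\ref{lem:U} applied to the residual signature-$(11,1)$ summand (whose length remains $\leq 4\leq 9$) splits off a second $U$. Thus $M_2\cong 2U\oplus L'$ for some positive definite even lattice $L'$ of rank $10$, and \cite[Theorem 1.2]{Wan19} applied once more forces $M_2\cong 2U\oplus E_8\oplus 2A_1=M_1$, contradicting $M_2\subsetneq M_1$. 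Hence $k=0$ and $M\cong 2U\oplus E_8\oplus 2A_1$.

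The hardest step will be the final splitting: deducing from $l(M_2)=4$ and signature $(12,2)$ that $M_2$ decomposes as $2U\oplus L'$, which unlocks the $2U\oplus L$ classification of \cite{Wan19}. This rests on Nikulin's splitting results for indefinite even lattices together with the fact that removing a unimodular summand preserves the discriminant form and hence the length. The bound $l(M_2)=4$, rather than the $l\leq 3$ available for $n\geq 13$, is precisely tight enough to make two successive splittings possible in signature $(12,2)$ and then $(11,1)$.
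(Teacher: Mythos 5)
Your overall strategy is the same as the paper's (Lemma~\ref{lem:exponent} to produce $M_1=2U\oplus L$, identify $M_1$ via the classification in \cite[Theorem 1.2]{Wan19}, deduce that $M$ is $2$-elementary, then kill the proper sublattices of $M_1$ by exhibiting an intermediate lattice that the same classification excludes). However, there is a genuine gap in both places where you invoke \cite[Theorem 1.2]{Wan19}: that theorem classifies $2$-reflective lattices of the form $2U\oplus L$ only under the hypothesis that \emph{$L$ has $2$-roots}. It is not a classification of all $2$-reflective lattices splitting $2U$ --- for instance $2U\oplus E_8(2)$ is $2$-reflective in signature $(10,2)$ even though $E_8(2)$ has no $2$-roots, and it does not appear in that table. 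So from ``$M_1=2U\oplus L$ is $2$-reflective of signature $(12,2)$'' you cannot immediately conclude $M_1\cong 2U\oplus E_8\oplus 2A_1$; a priori there could be a $2$-reflective $2U\oplus L$ with $L$ a root-free rank-$10$ lattice. The same objection applies to your application of the theorem to $M_2\cong 2U\oplus L'$.

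The missing ingredient is Lemma~\ref{lem:2-roots}: since $l(M_1)\le 5\le \rank(L)-3=7$, the genus of $L$ contains a class $T$ with $2$-roots, and $2U\oplus L\cong 2U\oplus T$ (stabilization by $2U$ makes the genus a single class), so one may assume $L$ has $2$-roots and only then quote \cite[Theorem 1.2]{Wan19}; likewise for $L'$ (or note that $M_2$ is the unique $2$-elementary lattice with its invariants, namely $2U\oplus D_8\oplus 2A_1$, whose positive-definite part visibly has $2$-roots). With this repair your argument goes through; your descent via a single index-$2$ intermediate lattice $M_2$ is a mild repackaging of the paper's step of listing the chain $2U(2)\oplus 10A_1<\dots<2U\oplus D_8\oplus 2A_1$ and applying Lemma~\ref{Lem:reductionMa}. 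One small phrasing issue at the end: the classification does not ``force $M_2\cong M_1$'' --- it shows no lattice with $\det M_2=16$ occurs in the table, i.e.\ $M_2$ is not $2$-reflective, contradicting Lemma~\ref{Lem:reductionMa}; the conclusion is the same.
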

\begin{proof}
Let $M$ be a $2$-reflective lattice of signature $(12,2)$. By Lemma \ref{lem:exponent}, there exists an even overlattice $M_1=2U\oplus L$ of $M$ satisfying that $e(M)=e(M_1)$ and $l(M_1)\leq 5$.  By Lemma \ref{lem:2-roots}, there exists a class $T$ in the genus of $L$ which has $2$-roots. Since $M_1\cong 2U\oplus T$ is $2$-reflective and $T$ has $2$-roots, we conclude from \cite[Theorem 1.2]{Wan19} that $M_1\cong 2U\oplus E_8\oplus 2A_1$. Therefore, both $M$ and $M_1$ are $2$-elementary. Thus $M'/M\cong (\ZZ/2\ZZ)^a$ for some positive integer $a$. By Lemma \ref{lem:2-elementary}, $a\leq 14$ and it is an even integer. For each such $a$ there is a unique lattice $M$ up to isomorphism. To prove the theorem it suffices to show that none of the following lattices is $2$-reflective:
\begin{align*}
&2U(2)\oplus 10A_1 < U(2)\oplus U\oplus 10A_1 < 2U\oplus 10A_1< \\
<&2U\oplus D_4\oplus 6A_1 < 2U\oplus D_6\oplus 4A_1 < 2U\oplus D_8\oplus 2A_1.     
\end{align*}
This follows from \cite[Theorem 1.2]{Wan19} and Lemma \ref{Lem:reductionMa}. 
\end{proof}

\begin{theorem}\label{th:n=11}
There are exactly four $2$-reflective lattices of signature $(11,2)$:
$$
2U\oplus D_4\oplus 5A_1, \quad 2U\oplus 2D_4\oplus A_1, \quad 2U\oplus D_8\oplus A_1, \quad 2U\oplus E_8\oplus A_1. 
$$
\end{theorem}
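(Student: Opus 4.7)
The plan is to follow the blueprint of the proof of Theorem \ref{th:n=12}. Given a $2$-reflective lattice $M$ of signature $(11,2)$, the first step is to produce a controlled overlattice via Lemma \ref{lem:exponent}: this yields $M_1 = 2U \oplus L$ with $\rank(L) = 9$, $e(M_1) = e(M)$, and $l(M_1) \le 5$, which is $2$-reflective by Lemma \ref{Lem:reductionMa}. Since $l(L) \le 5 \le \rank(L) - 3$, Lemma \ref{lem:2-roots} provides a class $T$ in the genus of $L$ containing $2$-roots, and the indefinite genus of $2U\oplus L$ has a single class because $l(M_1) \le 5 \le \rank(M_1) - 2$, giving $M_1 \cong 2U \oplus T$. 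Applying \cite[Theorem 1.2]{Wan19} then pins $M_1$ down to one of the four lattices listed in the statement, all of which happen to be $2$-elementary.

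The second step exploits this $2$-elementary structure to enumerate candidates for $M$. Since $e(M) = e(M_1) = 2$, the lattice $M$ is itself $2$-elementary with $A_M \cong (\ZZ/2\ZZ)^a$; Lemma \ref{lem:2-elementary} requires $a$ to be odd and at most $13$ (from $n + a$ even), and since $4 \nmid n - 2 = 9$, it also gives uniqueness of $M$ up to isomorphism for each such $a$. A direct computation of discriminant lengths shows that the four candidate $M_1$'s realize exactly $a = 1, 3, 5, 7$, so the task reduces to ruling out the three unique $2$-elementary lattices with $a \in \{9, 11, 13\}$.

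The third step identifies these remaining lattices as $2U \oplus 9A_1$, $U \oplus U(2) \oplus 9A_1$, and $2U(2) \oplus 9A_1$ respectively, and assembles them into the chain of finite-index inclusions
$$2U(2) \oplus 9A_1 < U \oplus U(2) \oplus 9A_1 < 2U \oplus 9A_1 < 2U \oplus D_6 \oplus 3A_1,$$
using $U(2) < U$ together with the standard index-$4$ embedding $6A_1 < D_6$. The top lattice $2U \oplus D_6 \oplus 3A_1$ has $2$-roots inside $D_6 \oplus 3A_1$ but is absent from the $n = 11$ entries of \cite[Theorem 1.2]{Wan19}, so it is not $2$-reflective; Lemma \ref{Lem:reductionMa} then propagates non-$2$-reflectivity down the entire chain and completes the proof.

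The main obstacle, as in the $n = 12$ case, is selecting the correct cap for the chain. The most natural overlattices of $2U \oplus 9A_1$ coming from the inclusions $kA_1 < D_k$ or $8A_1 < E_8$, namely $2U \oplus D_4 \oplus 5A_1$, $2U \oplus 2D_4 \oplus A_1$, $2U \oplus D_8 \oplus A_1$, and $2U \oplus E_8 \oplus A_1$, are precisely the four $2$-reflective lattices in the statement and so are useless for the reduction; the passage through $D_6 \oplus 3A_1$ has to be chosen deliberately to land outside the classified list.
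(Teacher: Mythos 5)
Your first two steps match the paper's argument (modulo the small slip that only three of the four target lattices can actually occur as the overlattice $M_1$ of Lemma \ref{lem:exponent}, since $2U\oplus D_4\oplus 5A_1$ has length $7>5$; this does not affect the conclusion that $M$ is $2$-elementary). The third step, however, has a genuine gap. Your cap $2U\oplus D_6\oplus 3A_1$ is a $2$-elementary lattice of signature $(11,2)$ with $a=5$, and so is $2U\oplus 2D_4\oplus A_1$; since $4\nmid n-2=9$, Lemma \ref{lem:2-elementary}(3) forces these two lattices to be \emph{isomorphic} (only $\delta=1$ is possible here, so the invariants $(n,a)$ determine the lattice). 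Hence $2U\oplus D_6\oplus 3A_1$ is not ``absent from the table'' of \cite[Theorem 1.2]{Wan19} --- it is one of the four $2$-reflective lattices written under a different name --- and Lemma \ref{Lem:reductionMa} gives you nothing, since sublattices of $2$-reflective lattices need not be $2$-reflective. This is exactly the kind of coincidence the paper exploits elsewhere (e.g.\ $2U\oplus 2E_8\oplus 2A_1\cong 2U\oplus E_8\oplus D_{10}$ in Theorem \ref{th:n>=13}), so it must be checked before declaring a lattice to be outside the classified list.

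Moreover, the strategy cannot be repaired by a cleverer choice of cap: any proper even overlattice of $2U\oplus 9A_1$ is again $2$-elementary with odd $a\leq 7$, hence by the uniqueness just cited is one of the four $2$-reflective lattices in the statement. This is the structural difference from the $n=12$ case, where non-$2$-reflective overlattices (with $a=4,6,8$) are available to head the chain. The paper's way out is different in kind: it observes $2U\oplus 9A_1\cong 2U\oplus E_8(2)\oplus A_1$ (again by Lemma \ref{lem:2-elementary}(3)) and invokes \cite[Theorem 6.2]{Wan19}, which requires the sublattice of $E_8(2)\oplus A_1$ generated by $2$-roots to have full rank $9$; since $E_8(2)$ has no $2$-roots, that sublattice is only $A_1$, so $2U\oplus 9A_1$ is not $2$-reflective, and Lemma \ref{Lem:reductionMa} then kills $U\oplus U(2)\oplus 9A_1$ and $2U(2)\oplus 9A_1$. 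You need some such non-overlattice criterion to finish; the downward-propagation-from-an-overlattice argument is unavailable for $n=11$.
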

\begin{proof}
The proof is similar to that of Theorem \ref{th:n=12}.
Let $M$ be a $2$-reflective lattice of signature $(11,2)$. By Lemma \ref{lem:exponent}, there exists an even overlattice $M_1$ of $M$ with $e(M)=e(M_1)$ and $l(M_1)\leq 5$. By a similar argument, we have a decomposition $M_1=2U\oplus L_1$ for some $L_1$ having $2$-roots, and then we show that $M_1$ is isomorphic to $2U\oplus E_8\oplus A_1$, or $2U\oplus D_8\oplus A_1$ or $2U\oplus 2D_4\oplus A_1$. Therefore, $M$ is $2$-elementary. We write $A_M \cong (\ZZ/2\ZZ)^a$. By Lemma \ref{lem:2-elementary}, $a\leq 13$ and it is an odd integer. For each such $a$ there is a unique lattice $M$ up to isomorphism. It remains to prove that none of the following lattices is $2$-reflective:
$$
2U(2)\oplus 9A_1 < U(2)\oplus U\oplus 9A_1 < 2U\oplus 9A_1 \cong 2U\oplus E_8(2)\oplus A_1.
$$
We derive from \cite[Theorem 6.2]{Wan19} that $2U\oplus E_8(2)\oplus A_1$ is not $2$-reflective, because $E_8(2)$ has no $2$-roots. We then finish the proof of the theorem. 
\end{proof}

\begin{theorem}\label{th:n=10}
There are exactly twelve $2$-reflective lattices of signature $(10,2)$: 
\begin{align*}
&2U\oplus E_8& &2U\oplus D_8& &2U\oplus 2D_4& &2U\oplus D_8'(2)& \\
&2U\oplus E_7\oplus A_1& &2U\oplus D_6\oplus 2A_1& &2U\oplus D_4\oplus 4A_1& &2U\oplus 8A_1& \\
&2U\oplus E_8(2)& &U\oplus U(2)\oplus E_8(2)& &U\oplus U(2)\oplus 8A_1& &2U(2)\oplus 8A_1.&
\end{align*}
\end{theorem}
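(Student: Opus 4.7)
The plan is to follow the strategy of Theorems \ref{th:n=12} and \ref{th:n=11}, with substantial extra case work because $4\mid n-2=8$, so Lemma \ref{lem:2-elementary}(3) no longer forces uniqueness of the $2$-elementary class per discriminant length. Let $M$ be a $2$-reflective lattice of signature $(10,2)$. First I would apply Lemma \ref{lem:exponent} to obtain an even overlattice $M_1=2U\oplus L$ with $e(M_1)=e(M)$ and $l(M_1)\leq 5$. Since $\rank(L)=8$ we have $l(A_L)_2\leq 5=8-3$ and $l(A_L)_p\leq 5\leq 8-2$ for odd $p$, so Lemma \ref{lem:2-roots} yields a class $T$ in the genus of $L$ that contains $2$-roots. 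By Eichler's theorem for indefinite lattices, $2U\oplus L\cong 2U\oplus T$, hence $M_1\cong 2U\oplus T$ with $T$ having $2$-roots. Lemma \ref{Lem:reductionMa} ensures $M_1$ is $2$-reflective, so \cite[Theorem 1.2]{Wan19} restricts $M_1$ to the seven lattices of signature $(10,2)$ whose positive definite summand has $2$-roots: $2U\oplus E_8$, $2U\oplus D_8$, $2U\oplus 2D_4$, $2U\oplus E_7\oplus A_1$, $2U\oplus D_6\oplus 2A_1$, $2U\oplus D_4\oplus 4A_1$, $2U\oplus 8A_1$.

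Each of these has exponent at most $2$, so $e(M)\leq 2$. The case $e(M)=1$ gives $M\cong 2U\oplus E_8$. Otherwise $M$ is $2$-elementary with $A_M\cong(\ZZ/2\ZZ)^a$, and by Lemma \ref{lem:2-elementary} the length $a$ is even with $0\leq a\leq 12$ and at most two isomorphism classes per value of $a$. A genus count via Nikulin's invariants $(a,\delta)$ gives exactly $1+2+2+2+2+2+1=12$ isomorphism classes of $2$-elementary or unimodular signature $(10,2)$ lattices, and I would verify these correspond precisely to the twelve lattices in the theorem: the seven maximal ones occupy the $(a,\delta)$-slots whose definite part contains $2$-roots, and the remaining five classes are filled by $2U\oplus E_8(2)$, $2U\oplus D_8'(2)$, $U\oplus U(2)\oplus E_8(2)$, $U\oplus U(2)\oplus 8A_1$, and $2U(2)\oplus 8A_1$.

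Because the overlattice argument already forces $M$ to lie among these twelve classes, only the positive direction remains: every class on the list is $2$-reflective. The seven maximal lattices are $2$-reflective by \cite[Theorem 1.2]{Wan19}. For $2U\oplus E_8(2)$ and $2U\oplus D_8'(2)$, whose definite parts have no $2$-roots, I would construct the reflective Borcherds products either from a vector-valued modular form on the Weil representation via Bruinier's criterion \cite{Bru02,Bru14} or as quasi-pullbacks of known $2$-reflective products on $2U\oplus 2E_8$, checking in each case that the zero divisor is supported only on $2$-root hyperplanes of the target lattice. For the sublattices $U\oplus U(2)\oplus E_8(2)$, $U\oplus U(2)\oplus 8A_1$, and $2U(2)\oplus 8A_1$, I would produce the $2$-reflective forms by descent from $2U\oplus E_8(2)$ and $2U\oplus 8A_1$, verifying that the Fourier expansions restrict to modular forms with the correct divisors.

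The main obstacle is precisely this positive-direction check for the five non-Wang lattices: identifying the correct vector-valued modular forms or quasi-pullbacks, and confirming that their zero divisors do not pick up extra quadratic divisors beyond the $2$-root hyperplanes of the target lattice. A secondary subtlety, compared with the cases $n\geq 11$, is the explicit identification of $D_8'(2)$ as the $(a,\delta)=(6,0)$ class of signature $(10,2)$, which has no $2$-roots in its positive definite part and therefore falls outside Wang's earlier classification of $2U\oplus L$ with $L$ containing $2$-roots.
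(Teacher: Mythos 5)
Your skeleton matches the paper's: pass to an even overlattice $M_1=2U\oplus L$ with $e(M_1)=e(M)$ and $l(M_1)\leq 5$ via Lemma \ref{lem:exponent}, use Lemma \ref{lem:2-roots} to replace $L$ by a class in its genus with $2$-roots, invoke \cite[Theorem 1.2]{Wan19} to pin down $M_1$ (note that $2U\oplus D_4\oplus 4A_1$ and $2U\oplus 8A_1$ are already excluded by $l(M_1)\leq 5$, so the candidate list has five entries, not seven --- harmless, since you only use $e(M)\leq 2$), conclude that $M$ is $2$-elementary, and enumerate by Nikulin's classification. However, there are two genuine gaps.

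First, the enumeration is off by one. For signature $(10,2)$ there are two $2$-elementary classes for \emph{every} even $a$ with $2\leq a\leq 12$ (distinguished by Nikulin's $\delta$, equivalently by the level being $2$ or $4$), so the total is $1+2\cdot 6=13$, not $1+2+2+2+2+2+1=12$. The class your count silently drops at $a=12$ is $2U(2)\oplus E_8(2)$, which certainly exists in the genus; it must instead be \emph{excluded}, and the paper does so by observing that it has no $2$-roots and hence carries no $2$-reflective form. As written, your proposal never mentions this lattice, so the step ruling it out is missing even though your final list happens to be correct.

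Second, your existence argument for the non-maximal lattices runs in the wrong direction. Lemma \ref{Lem:reductionMa} transfers $2$-reflectivity to \emph{overlattices}, not to finite-index sublattices: restricting a $2$-reflective form on $M$ to $M_2<M$ keeps the same quadratic divisors, but a hyperplane $l^\perp$ with $l$ a $2$-root of $M$ need not be orthogonal to any norm-$2$ vector of $M_2$ (the primitive generator of $\QQ l\cap M_2$ may be $kl$ with $k>1$, of norm $2k^2$). So ``descent from $2U\oplus E_8(2)$ and $2U\oplus 8A_1$'' does not by itself yield $2$-reflective forms on $U\oplus U(2)\oplus E_8(2)$, $U\oplus U(2)\oplus 8A_1$ and $2U(2)\oplus 8A_1$. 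The paper argues the other way around: it cites the explicitly known forms on the two \emph{smallest} lattices --- Borcherds' form on $U\oplus U(2)\oplus E_8(2)$ and the Gritsenko--Nikulin forms on $2U(2)\oplus mA_1$ --- and then obtains every remaining non-maximal entry, including $2U\oplus E_8(2)$ and $2U\oplus D_8'(2)$ (which you planned to handle by fresh Borcherds-product or quasi-pullback constructions), as overlattices of these via Lemma \ref{Lem:reductionMa}. This also removes what you identify as the main obstacle: no new divisor verification is required.
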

\begin{proof}
Let $M$ be a $2$-reflective lattice of signature $(10,2)$. By Lemmas \ref{lem:exponent} and \ref{lem:2-roots}, there exists an even overlattice $M_1=2U\oplus L$ of $M$ satisfying that $e(M)=e(M_1)$, $l(M_1)\leq 5$ and $L$ has $2$-roots.  By \cite[Theorem 1.2]{Wan19}, we find that $M_1$ is isomorphic to $2U\oplus E_8$, or $2U\oplus D_8$ or $2U\oplus 2D_4$, or $2U\oplus E_7\oplus A_1$, or $2U\oplus D_6\oplus 2A_1$. This implies that both $M$ and $M_1$ are $2$-elementary. We write $A_M\cong (\ZZ/2\ZZ)^a$. By Lemma \ref{lem:2-elementary}, $a\leq 12$ and it is an even integer. When $a=0$, $M=2U\oplus E_8$. For any even $a\geq 2$ there are exactly two lattices $M$ up to isomorphism: one with level $2$ and the other with level $4$. Since $2U(2)\oplus E_8(2)$ has no $2$-roots, it is not $2$-reflective. 
\end{proof}

The (unique) $2$-reflective modular form on $U\oplus U(2)\oplus E_8(2)$ was first constructed by Borcherds \cite{Bor96} in the study of the moduli space of Enriques surfaces. Borcherds also showed that this form defines the denominator of the fake monster Lie superalgebra (see \cite{Bor92}).  The $2$-reflective modular forms on lattices $2U(2)\oplus mA_1$ for $1\leq m\leq 8$ were constructed by Gritsenko--Nikulin \cite[Section 6.2]{GN18}. These forms are identical to some reflective modular forms of weight $12-m$ on $2U\oplus D_m$. 

The last two cases (i.e. $n=8, 9$) are more subtle because there are $2$-reflective lattices which are not $2$-elementary and we cannot use Lemma \ref{lem:exponent} in a direct way. 

\begin{theorem}\label{th:n=9}
There are exactly nine $2$-reflective lattices of signature $(9,2)$:  
\begin{align*}
&2U\oplus D_7& &2U\oplus A_7& &2U\oplus E_7& &2U\oplus E_6\oplus A_1& 
&2U\oplus D_6\oplus A_1& \\ &2U\oplus D_4\oplus 3A_1& &2U\oplus 7A_1& &U\oplus U(2)\oplus 7A_1& 
&2U(2)\oplus 7A_1.&
\end{align*}
\end{theorem}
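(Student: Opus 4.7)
The approach mirrors the proofs of Theorems \ref{th:n=12}, \ref{th:n=11}, and \ref{th:n=10}, with extra care for two complications specific to the signature $(9,2)$ setting: the final list contains non-$2$-elementary lattices, and the length hypothesis of Lemma \ref{lem:2-roots} is borderline at $\rank(L)=7$. First, apply Lemma \ref{lem:exponent} to $M$ to produce an even overlattice $M_1 = 2U \oplus L$ with $e(M_1) = e(M)$ and $l(M_1) \leq 5$; Lemma \ref{Lem:reductionMa} then makes $M_1$ itself $2$-reflective. Since $\rank(L) = 7$ and $l(L)\leq 5$, the bound $l(A_L)_2 \leq \rank(L) - 3 = 4$ required by Lemma \ref{lem:2-roots} holds unless $L$ is $2$-elementary of length exactly $5$; in that remaining case, a direct inspection of the short list of rank-$7$ even $2$-elementary positive definite lattices of length $5$ (essentially $D_4\oplus 3A_1$ and its relatives) confirms that $L$ already contains $2$-roots. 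Using that $2U \oplus L \cong 2U \oplus T$ for any $T$ in the genus of $L$, we may assume $L$ itself has $2$-roots, and then \cite[Theorem 1.2]{Wan19} identifies $L$ with one of the seven root lattices $D_7$, $A_7$, $E_7$, $E_6\oplus A_1$, $D_6\oplus A_1$, $D_4\oplus 3A_1$, $7A_1$.

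Next, for each of the seven candidates $M_1$, I would classify all $2$-reflective finite-index sublattices $M$ with $e(M)=e(M_1)$. When $M_1$ is $2$-elementary — that is, when $L \in \{E_7, D_6\oplus A_1, D_4\oplus 3A_1, 7A_1\}$ — so is $M$, and Lemma \ref{lem:2-elementary} reduces the problem to finitely many isomorphism classes indexed by the length $a$ of $A_M$. Any candidate isomorphic to $2U(2^k)\oplus L_0$ whose positive definite summand contains no $2$-roots is excluded by Lemma \ref{Lem:reductionMa} together with \cite[Theorem 6.2]{Wan19}. Running through each $L$ in turn, this analysis singles out the three $2$-elementary examples $2U(2)\oplus 7A_1$, $U\oplus U(2)\oplus 7A_1$, and $2U\oplus 7A_1$ when $L = 7A_1$, and it forces $M = M_1$ when $L$ is $E_7$, $D_6\oplus A_1$, or $D_4\oplus 3A_1$.

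The remaining non-$2$-elementary cases $L \in \{D_7, A_7, E_6\oplus A_1\}$, in which $A_{M_1}$ contains a cyclic summand of order $4$, $8$, or $6$, require showing that $M = M_1$. The idea is that any proper sublattice $M < M_1$ with $e(M) = e(M_1)$ corresponds to a non-trivial isotropic subgroup of $A_M$ of exponent dividing $e(M_1)$, and the exponent constraint together with the irreducibility of the root system of $L$ sharply restricts the possible discriminant forms of $M$. Enumerating the surviving sublattices in each of the three cases and testing each against Lemma \ref{lem:pullback} and \cite[Theorem 6.2]{Wan19} — which forbids any $2$-reflective lattice of the form $2U\oplus K$ when $K$ carries no $2$-roots — should eliminate every proper candidate. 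This last step is the main obstacle: unlike for $n\geq 10$, one cannot reduce cleanly to a $2$-elementary classification via Lemma \ref{lem:2-elementary}, so a careful case-by-case analysis of the finite-index sublattices of $2U\oplus D_7$, $2U\oplus A_7$, and $2U\oplus E_6\oplus A_1$ that preserve the exponent of the discriminant group is required.
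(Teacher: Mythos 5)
Your overall frame (pass to an overlattice splitting $2U$, feed it into \cite[Theorem 1.2]{Wan19}, then handle the $2$-elementary sublattices via Lemma \ref{lem:2-elementary}) agrees with the paper for the exponent-$2$ part of the classification, and that part of your argument is essentially sound. But the step you yourself flag as ``the main obstacle'' --- eliminating proper sublattices in the non-$2$-elementary cases --- is left unexecuted, and the tools you propose for it would not carry it out. Lemma \ref{lem:pullback} removes a positive definite direct summand, and \cite[Theorem 6.2]{Wan19} only speaks about lattices of the form $2U\oplus K$; a general finite-index sublattice $M$ of $2U\oplus D_7$, $2U\oplus A_7$ or $2U\oplus E_6\oplus A_1$ with prescribed exponent need not split $2U$ once its length grows (Lemma \ref{lem:U} requires $l\leq n-2$ stage by stage), so neither tool applies to it directly. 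The paper's mechanism is different and is what makes the induction close: fix a \emph{maximal} even overlattice $M_0$ (only $2U\oplus E_7$, $2U\oplus D_7$, $2U\oplus E_6\oplus A_1$ occur), and for any proper $2$-reflective sublattice $M$ produce an intermediate lattice $M_1$ with $M_0/M_1$ cyclic of small order. Such an $M_1$ has determinant $\det(M_0)a_s^2$ and length at most $3$ or $4$, hence splits $2U$ with a positive definite part having $2$-roots, hence must occur in the table of \cite[Theorem 1.2]{Wan19}; comparing determinants and exponents rules it out, and Lemma \ref{Lem:reductionMa} (an overlattice of a $2$-reflective lattice is $2$-reflective) transfers the contradiction back to $M$. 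The case $M_0=2U\oplus E_7$ needs the further twist via complete $2$-reflectivity: one shows the relevant intermediate lattice is $2U\oplus D_6\oplus A_1$, which is not complete $2$-reflective, so Lemmas \ref{lem:complete}, \ref{lem:non-complete} and \ref{lem:U} split off $A_1\oplus 2U$ from the overlattice supplied by Lemma \ref{lem:exponent} and force $2$-elementarity. None of this chain appears in your proposal, so the claims $M=M_1$ for $L\in\{D_7,A_7,E_6\oplus A_1\}$ are unproved.

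Two smaller inaccuracies. First, when $l(M_1)\leq 5$ and $\rank(L)=7$, the hypothesis $l(A_L)_2\leq 4$ of Lemma \ref{lem:2-roots} can fail without $L$ being $2$-elementary (e.g.\ $A_L\cong(\ZZ/2\ZZ)^4\oplus\ZZ/6\ZZ$ has length $5$ but $2$-length $5$), so your characterization of the exceptional case is incomplete; the paper sidesteps this at the first stage by using a maximal overlattice, whose length is at most $3$. Second, the assertion that the analysis ``forces $M=M_1$ when $L$ is $E_7$, $D_6\oplus A_1$ or $D_4\oplus 3A_1$'' is false as stated: $2U\oplus D_6\oplus A_1$ is a proper finite-index sublattice of $2U\oplus E_7$ of the same exponent, so $M_1=2U\oplus E_7$ does not pin down $M$; what is true is only that $M$ is then one of the six $2$-elementary lattices in the final list.
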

\begin{proof}
Let $M$ be a $2$-reflective lattice of signature $(9,2)$. We fix a maximal even overlattice $M_0$ of $M$. Combining Lemmas \ref{lem:maximal} and \ref{lem:2-roots}, we have a decomposition $M_0=2U\oplus L_0$ such that $L_0$ has $2$-roots. Since $M_0=2U\oplus L_0$ is $2$-reflective and $L_0$ has $2$-roots, we conclude from \cite[Theorem 1.2]{Wan19} that $M_0$ is isomorphic to $2U\oplus E_6\oplus A_1$, or $2U\oplus E_7$ or $2U\oplus D_7$. Notice that $M<M_0<M_0'<M'$. There exist positive integers $t$ and $a_j$ for $1\leq j\leq t$ such that
$$
M' / M_0' \cong (\ZZ / a_1\ZZ) \oplus \cdots \oplus (\ZZ/a_t\ZZ).
$$
For any $a_s$ there exists an even overlattice $M_1$ of $M$ such that $M<M_1<M_0<M_0'<M_1'<M'$ and $M_1'/M_0' \cong \ZZ / a_s\ZZ$ (and thus $M_0/M_1\cong \ZZ / a_s\ZZ$).  We next discuss by cases.

\vspace{2mm}

\textbf{(I)} $M_0=2U\oplus E_6\oplus A_1$. We claim that $M=M_0$. 

Suppose that there are some $a_s>1$. Then $\det(M_1)=6a_s^2$ and $l(M_1)\leq 3$. By Lemma \ref{lem:2-roots}, there exists an even positive definite lattice $L_1$ with $2$-roots such that $M_1=2U\oplus L_1$. Thus $M_1$ lies in the table of \cite[Theorem 1.2]{Wan19} as a $2$-reflective lattice, which leads to a contradiction by comparing determinants of lattices. Therefore, every $a_j$ is $1$ and then $M= M_0= 2U\oplus E_6\oplus A_1$. 

\vspace{2mm}

\textbf{(II)} $M_0=2U\oplus D_7$. We claim that $M=M_0$. 

Suppose that there are some $a_s>1$. Then $M_1$ has determinant $4a_s^2$, length $l(M_1)\leq 3$ and exponent $e(M_1)\geq 4$. Similarly to the previous case, $M_1$ is a $2$-reflective lattice in the table of \cite[Theorem 1.2]{Wan19}, which leads to a contradiction by comparing determinants and exponents of lattices. 

\vspace{2mm}

\textbf{(III)} $M_0=2U\oplus E_7$. We claim that either $M=2U\oplus A_7$ or $M$ is $2$-elementary. 

A similar argument shows that every $a_j$ is either $1$ or $2$. Therefore, there exists a non-negative integer $a$ such that
$$
M'/M_0' \cong (\ZZ/2\ZZ)^a.
$$
A subgroup $G$ of $M'/M_0'$ of order $d$ corresponds to an even lattice $M_G$ of determinant $2d^2$ satisfying that $M<M_G<M_0$ and $M_0 / M_G \cong G$. More precisely, 
$$
M_G=\{ x\in M_0 : (x,y)\in \ZZ, \; y\in G+M_0'\}. 
$$

\textbf{(1)} When $a=1$, $\det(M)=2^3$, $l(M)\leq 3$ and thus we can write $M=2U\oplus L$ such that $L$ has $2$-roots. By \cite[Theorem 1.2 (c)]{Wan19}, $M$ is isomorphic to $2U\oplus A_7$ or $2U\oplus D_6\oplus A_1$. 

\textbf{(2)} We now consider the case $a\geq 2$. Let $G=\ZZ/2\ZZ \times \ZZ/2\ZZ$ be a subgroup of $M'/M_0'$. Similarly to the case $a=1$, we find that the lattice $M_1$ corresponding to a subgroup $\ZZ/2\ZZ$ of $G$ is $2U\oplus A_7$ or $2U\oplus D_6\oplus A_1$. Suppose that $M_1=2U\oplus A_7$. Then we have that $M<M_G<M_1$, $\det(M_G)=2^5$ and $l(M_G)\leq 3$. It follows that the $2$-reflective lattice $M_G$ has a decomposition $2U\oplus L_G$ such that $L_G$ has $2$-roots, which yields that $M_G$ lies in the table of \cite[Theorem 1.2 (c)]{Wan19}. This leads to a contradiction by considering the determinant and the length. Therefore, $M_1=2U\oplus D_6\oplus A_1$. We see from \cite[Theorem 6.2 (c)]{Wan19} that $M_1$ is not complete $2$-reflective, that is, every $2$-reflective modular form on $M_1$ either has a quadratic divisor with multiplicity larger than $1$ or does not vanish on some quadratic divisor orthogonal to a $2$-root of $M_1$.  

By Lemma \ref{lem:exponent}, there exists an even overlattice $M_2=2U\oplus L_2$ of $M$ satisfying that $e(M_2)=e(M)$ and $l(M_2)\leq 5$. We choose the above $M_0$ as a maximal even overlattice of $M_2$. 

If $l(M_2) \neq 1$, i.e. $M_2\neq 2U\oplus E_7$, then we can choose $M_1$ such that $M_2< M_1=2U\oplus D_6\oplus A_1$. By Lemma \ref{lem:complete}, the $2$-reflective lattice $M_2$ is not complete $2$-reflective. According to Lemma \ref{lem:non-complete}, we can write $M_2=A_1\oplus K$. Since $\det(M)=2^{2a+1}$, we have $l(M_2)=l(A_1)+l(K)$, so $l(K)\leq 4$. Therefore, by Lemma \ref{lem:U} we can write $K=2U\oplus T$. Since $M_2=2U\oplus T\oplus A_1$ is $2$-reflective, it lies in the table of \cite[Theorem 1.2 (c)]{Wan19}. We then conclude that both $M$ and $M_2$ are $2$-elementary. 

We complete the proof by the classification of $2$-elementary lattices. 
\end{proof}

\begin{theorem}
There are exactly thirteen $2$-reflective lattices of signature $(8,2)$:  
\begin{align*}
&2U\oplus D_6& &2U\oplus A_6& &2U\oplus 2A_3& &2U\oplus 3A_2& &2U\oplus E_6& \\
&2U\oplus D_5\oplus A_1& &2U\oplus A_5\oplus A_1& &2U\oplus D_4\oplus 2A_1& &2U\oplus 6A_1& &2U\oplus E_6'(3)& \\
&U\oplus U(3)\oplus E_6'(3)& &U\oplus U(2)\oplus 6A_1& &2U(2)\oplus 6A_1&
\end{align*}
\end{theorem}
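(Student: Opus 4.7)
The plan is to mirror the strategy of Theorem \ref{th:n=9}. Let $M$ be a 2-reflective lattice of signature $(8,2)$, and fix a maximal even overlattice $M_0 \supseteq M$; by Lemma \ref{Lem:reductionMa}, $M_0$ is again 2-reflective. Lemma \ref{lem:maximal} writes $M_0 = 2U \oplus L_0$ with $L_0$ positive definite of rank $6$. If the discriminant group $A_{L_0}$ satisfies the length bounds of Lemma \ref{lem:2-roots}, we can replace $L_0$ by a class in its genus having $2$-roots, and since the genus of $M_0$ consists of a single isomorphism class in this range, we may assume $L_0$ itself has $2$-roots. Then \cite[Theorem 1.2]{Wan19} forces $L_0$ to be one of the rank-$6$ entries of that classification; a direct discriminant-form calculation picks out the cases for which $2U \oplus L_0$ is in fact maximal as an even lattice, namely $L_0 \in \{D_6,\, A_6,\, E_6,\, D_5 \oplus A_1,\, A_5 \oplus A_1\}$. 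If the length bounds of Lemma \ref{lem:2-roots} fail, then $A_{L_0}$ has $2$-length $\geq 4$ or $p$-length $\geq 5$ for some odd prime $p$; combined with the maximality of $L_0$ and the absence of $2$-roots in $L_0$, a case analysis of maximal rank-$6$ lattices with the requisite discriminant forces $L_0 = E_6'(3)$. This yields six candidate maximal $M_0$.

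For each candidate $M_0$ I enumerate the even sublattices $M \leq M_0$ by considering subgroups $G \leq M_0/M \cong M'/M_0'$ and the intermediate lattices $M_G = \{x \in M_0 : (x,y) \in \ZZ \text{ for all } y \in G + M_0'\}$, as in Case (III) of Theorem \ref{th:n=9}. When the length of $A_{M_G}$ is small enough that the hypotheses of Lemmas \ref{lem:U} and \ref{lem:2-roots} apply, we obtain $M_G \cong 2U \oplus L_G$ with $L_G$ in a genus containing a $2$-root class, so $M_G$ must appear in \cite[Theorem 1.2]{Wan19}, and matching determinant, length and exponent pins it down. When the length is too large, Lemma \ref{lem:exponent} replaces $M_G$ by an even overlattice of length $\leq 5$ and the same exponent, and the triple combination of Lemma \ref{lem:non-complete}, Lemma \ref{lem:complete} and \cite[Theorem 6.2]{Wan19} either splits off an $A_1$ factor (returning us to \cite[Theorem 1.2]{Wan19}) or yields a contradiction. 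For the $2$-elementary sublattices of $M_0 \in \{2U \oplus D_6,\, 2U \oplus D_5 \oplus A_1,\, 2U \oplus A_5 \oplus A_1\}$, Lemma \ref{lem:2-elementary} together with the rigidity argument of Theorem \ref{th:n=10} enumerates the possibilities and yields $2U \oplus 2A_3$, $2U \oplus D_4 \oplus 2A_1$, $2U \oplus 6A_1$, $U \oplus U(2) \oplus 6A_1$ and $2U(2) \oplus 6A_1$.

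The main obstacle will be the $3$-elementary and mixed cases descending from $M_0 = 2U \oplus E_6$ and $M_0 = 2U \oplus E_6'(3)$; these have no analog in Theorems \ref{th:n=9}--\ref{th:n=12}, where every 2-reflective lattice turned out to be $2$-elementary. For $M_0 = 2U \oplus E_6$, the only nontrivial isotropic subgroup of $A_{M_0} \cong \ZZ/3$ corresponds to the index-$3$ embedding $3A_2 \hookrightarrow E_6$, producing $2U \oplus 3A_2$; any further descent would violate the length/determinant bounds against \cite[Theorem 1.2]{Wan19}. For $M_0 = 2U \oplus E_6'(3)$ one must analyse the $3$-isotropic subgroups of $M_0/M \subseteq A_{M_0} \cong (\ZZ/3)^5$. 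Here the $A_1$-splitting argument of Lemma \ref{lem:non-complete} is unavailable since $M_0$ has no $A_1$ orthogonal summand, so a refined $3$-elementary analog is required: the expectation is that only the index-$3$ sublattice $U \oplus U(3) \oplus E_6'(3)$ survives, while $2U(3) \oplus E_6'(3)$ and further descents are excluded by a determinant/length argument paralleling the $2$-elementary one. Handling this last step, together with verifying that the forty-two listed lattices really do admit $2$-reflective modular forms (contained in \cite{Wan19} for the $2U \oplus L$ entries and in \cite{Bor96, GN18} for the remaining ones), completes the proof.
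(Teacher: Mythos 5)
Your overall strategy (pass to a maximal even overlattice, descend through intermediate lattices, match each stage against \cite[Theorem 1.2]{Wan19} via Lemmas \ref{lem:U} and \ref{lem:2-roots}, and use the $A_1$-splitting mechanism of Lemmas \ref{lem:complete} and \ref{lem:non-complete} when the length is too large) is the same as the paper's, but your list of candidate maximal overlattices is wrong, and the error propagates. Neither $2U\oplus A_5\oplus A_1$ nor $2U\oplus E_6'(3)$ is a maximal even lattice: the discriminant form of $A_5\oplus A_1$ has an isotropic element of order $2$ (corresponding to the index-$2$ embedding $A_5\oplus A_1\hookrightarrow E_6$), and $A_{E_6'(3)}\cong(\ZZ/3\ZZ)^5$ is far from anisotropic; in both cases $2U\oplus E_6$ is a proper even overlattice. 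The correct list of maximal candidates is the paper's four: $2U\oplus D_6$, $2U\oplus A_6$, $2U\oplus E_6$ and $2U\oplus D_5\oplus A_1$. Moreover, your proposed mechanism for producing $E_6'(3)$ at the top level --- failure of the length hypotheses of Lemma \ref{lem:2-roots} --- cannot occur: maximality forces the discriminant form of $M_0$ to be anisotropic, so $l(M_0)\leq 3$, which already satisfies those hypotheses when $\rank(L_0)=6$.

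The more serious consequence is that your descent below $2U\oplus E_6$ is incorrect. You claim the only proper $2$-reflective sublattice arising there is $2U\oplus 3A_2$ and that ``any further descent would violate the length/determinant bounds,'' but $2U\oplus E_6'(3)$ and $U\oplus U(3)\oplus E_6'(3)$ are exactly such further descents (of index $9$ and $27$ in $2U\oplus E_6$) and both belong to the final list; your argument would exclude them, and you only recover them by wrongly promoting $2U\oplus E_6'(3)$ to a maximal overlattice. Note also that sublattices of $M_0$ are not parametrized by isotropic subgroups of $A_{M_0}$ --- those classify even \emph{overlattices}, and here $A_{M_0}\cong\ZZ/3\ZZ$ is anisotropic, so that bookkeeping device is empty; one must instead work with the group $M'/M_0'$ as the paper does. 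The genuinely new difficulty of the $(8,2)$ case is precisely this level-$3$ branch: one shows that every $a_j$ dividing the structure of $M'/M_0'$ equals $1$, $2$ or $3$, identifies the index-$3$ lattices $M_i$ with $2U\oplus 3A_2$ and the index-$9$ lattices $M_{ij}$ with $2U\oplus E_6'(3)$ by determinant/length matching against \cite[Theorem 1.2]{Wan19}, and deduces that $M$ has level $3$, leaving only $U\oplus U(3)\oplus E_6'(3)$, $2U\oplus E_6'(3)$, $2U\oplus 3A_2$, $2U\oplus E_6$ (with $2U(3)\oplus E_6'(3)$ excluded for lack of $2$-roots). Your proposal contains no substitute for this argument, so the classification of the level-$3$ lattices is a genuine gap.
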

\begin{proof}
Let $M$ be a $2$-reflective lattice of signature $(8,2)$. We fix $M_0$ as a maximal even overlattice of $M$. Since $l(M_0)\leq 3$, we can represent $M_0=2U\oplus L_0$. By Lemma \ref{lem:2-roots}, we can assume that $L_0$ has $2$-roots. Since $M_0=2U\oplus L_0$ is $2$-reflective and $L_0$ has $2$-roots, we know from \cite[Theorem 1.2 (c)]{Wan19} that $M_0$ is isomorphic to $2U\oplus D_6$, or $2U\oplus A_6$, or $2U\oplus E_6$, or $2U\oplus D_5\oplus A_1$. Note that $M<M_0<M_0'<M'$. There exist positive integers $t$ and $a_j$ for $1\leq j\leq t$ such that
$$
M' / M_0' = (\ZZ / a_1\ZZ) \oplus \cdots \oplus (\ZZ/a_t\ZZ).
$$
For any $a_s$ there exists an even lattice $M_1$ such that $M<M_1<M_0$ and $M_1'/M_0' \cong \ZZ / a_s\ZZ$.  We next discuss by cases.    

\vspace{2mm}

\textbf{(I)} $M_0 = 2U\oplus A_6$. We claim that $M=M_0$. 

The above $M_1$ has determinant $7a_s^2$ and length $l(M_1)\leq 3$. By Lemmas \ref{lem:U} and \ref{lem:2-roots}, we have a decomposition $M_1=2U\oplus L_1$ such that $L_1$ has $2$-roots. Therefore, the $2$-reflective lattice $M_1$ lies in the table of \cite[Theorem 1.2 (c)]{Wan19}. We then find that $a_s$ has to be $1$. 

\vspace{2mm}

\textbf{(II)} $M_0 = 2U\oplus D_5\oplus A_1$. We claim that $M=M_0$. 

The above $M_1$ has determinant $2^3a_s^2$ and length $l(M_1)\leq 4$. We notice that $M_0$ is not complete $2$-reflective (see \cite[Theorem 6.2 (c)]{Wan19}). By Lemma \ref{lem:U}, $M_1$ splits $2U$. Thus Lemma \ref{lem:complete} yields that $M_1$ is not complete $2$-reflective. It follows from Lemma \ref{lem:non-complete} that we has a decomposition $M_1=A_1\oplus K$ with $l(K)\leq 4$. Therefore, we can write $K=2U\oplus T$ and then $M_1=2U\oplus A_1\oplus T$ by Lemma \ref{lem:U}. Thus the $2$-reflective lattice $M_1$ lies in the table of \cite[Theorem 1.2 (c)]{Wan19}. We then see that $a_s=1$. 

\vspace{2mm}

\textbf{(III)} $M_0=2U\oplus E_6$. We claim that $M=2U\oplus A_5\oplus A_1$ or $M$ has level $3$. 

\textbf{(1)} Suppose that there are some $a_s=2$. We show that $M=2U\oplus A_5\oplus A_1$. 

A subgroup $\ZZ/2\ZZ$ of $M'/M_0'$ induces an even lattice $M_1$ with $\det(M_1)=12$ and $l(M_1)\leq 2$. Therefore, by Lemmas \ref{lem:U} and \ref{lem:2-roots} the $2$-reflective lattice $M_1$ has an expression $M_1=2U\oplus L_1$ such that $L_1$ has $2$-roots. \cite[Theorem 1.2 (c)]{Wan19} then yields that $M_1= 2U\oplus A_5\oplus A_1$. If $M\neq M_1$ then there exists an even lattice $M_2$ satisfying that $M<M_2<M_1$ and $l(M_2)\leq 4$. Since $M_1$ is not complete $2$-reflective, by Lemma \ref{lem:complete} $M_2$ is not complete $2$-reflective, so we can write $M_2=A_1\oplus K$ with $l(K)\leq 4$ by Lemma \ref{lem:non-complete}. Therefore, we can represent $M_2=2U\oplus A_1\oplus T$ by Lemma \ref{lem:U}. By \cite[Theorem 1.2 (c)]{Wan19}, such a $2$-reflective lattice $M_2$ does not exist, leading to a contradiction. Therefore, $M=M_1=2U\oplus A_5\oplus A_1$. 

\textbf{(2)} Suppose that there is no $a_j = 2$. If there is $a_s>3$, then $\ZZ/a_s\ZZ$ induces a lattice $M_1$ with $\det(M_1)=3a_s^2$ and $l(M_1)\leq 3$. Therefore, we can write $M_1=2U\oplus L_1$ such that $L_1$ has $2$-roots. Clearly, such $2$-reflective lattice $M_1$ does not exist by \cite[Theorem 1.2 (c)]{Wan19}, a contradiction. Thus we can assume that
$$
M'/M_0' \cong (\ZZ / 3\ZZ)^t. 
$$
We next show that $M$ has level $3$. 

We denote the generators of $M'/M_0'$ by $v_i$ for $1\leq i\leq t$. 
Any subgroup $\latt{v_i}\cong \ZZ/3\ZZ$ induces an even lattice 
$$
M_i=\{ x\in M_0: (x,v_i) \in \ZZ \}
$$
with $\det(M_i)=3^3$ and $l(M_i)\leq 3$. 
Note that $M_i'$ is generated by $M_0'$ and $v_i$. By Lemmas \ref{lem:U} and \ref{lem:2-roots}, we can express $M_i=2U\oplus L_i$ such that $L_i$ has $2$-roots, and therefore $M_i$ lies in the table of \cite[Theorem 1.2 (c)]{Wan19}. We find that $M_i \cong 2U\oplus 3A_2$, so $3(v_i,v_i)\in 2\ZZ$ and $3v_i \in M_0$.

When $t>1$, for $i\neq j$ we define an even lattice
$$
M_{ij}=\{ x\in M_0: (x,v_i)\in \ZZ, \; (x,v_j)\in \ZZ\}
$$
with $\det(M_{ij})=3^5$.
Note that the dual lattice $M_{ij}'$ is generated by $M_0'$, $v_i$ and $v_j$. 

If $M_{ij}'/M_{ij}$ has elements of order $9$, then $l(M_{ij})\leq 4$. By Lemma \ref{lem:2-roots}, we can write $M_{ij}=2U\oplus L_{ij}$ for some $L_{ij}$ with $2$-roots. \cite[Theorem 1.2 (c)]{Wan19} implies that such a $2$-reflective lattice $M_{ij}$ does not exist. Therefore, each non-zero element of $M_{ij}'/M_{ij}$ has order $3$. 

We have thus proved that $M_{ij}'/M_{ij}=(\ZZ/3\ZZ)^5$, which implies that $M_{ij}$ has level $3$ and thus $M_{ij}\cong 2U\oplus E_6'(3)$. Thus $3(v_i,v_j)\in \ZZ$. It is easy to verify by definition that $M$ is of level $3$. 

Thus $M=U\oplus U(3)\oplus E_6'(3)$, $2U\oplus E_6'(3)$, $2U\oplus 3A_2$ or $2U\oplus E_6$. The lattice $2U(3)\oplus E_6'(3)$ has no $2$-roots, so it is not $2$-reflective. We remark that the complete $2$-reflective modular form on $U\oplus U(3)\oplus E_6'(3)$ is identical to the $6$-reflective modular form on $2U\oplus 3A_2$ by \cite[Lemma 2.2]{Wan22}.
 
\vspace{2mm}

\textbf{(IV)} $M_0=2U\oplus D_6$. We claim that $M$ is $2$-elementary or $M=2U\oplus 2A_3$. 

We can write
$$
M'/M_0' \cong (\ZZ / 2^{a_1}\ZZ)^{b_1} \oplus \cdots \oplus (\ZZ / 2^{a_t}\ZZ)^{b_t},
$$
otherwise there is an even lattice $M_1$ satisfying that $M<M_1<M_0$, $\det(M_1)=2^2 a^2$ for some odd integer $a$ and $l(M_1)\leq 2$. Thus we can write $M_1=2U\oplus L_1$ such that $L_1$ has $2$-roots. The $2$-reflective lattice $M_1$ contradicts \cite[Theorem 1.2 (c)]{Wan19}.

Assume that $M\neq M_0$. Let $v\in M'$ with $2v\in M_0'$ and $v\not\in M_0'$. We define 
$$
M_1=\{ x\in M_0: (x,v)\in \ZZ \}.
$$
Then $M_1'$ is generated by $M_0'$ and $v$. Note that $\det(M_1)=2^4$. We discuss by three cases.
\begin{enumerate}
    \item $M_1'/M_1=(\ZZ/2\ZZ)^4$. We show that $M$ is $2$-elementary.
    
    As a $2$-elementary lattice, $M_1=2U\oplus D_4\oplus 2A_1$. By replacing $M$ with an even overlattice of the same exponent (see Lemma \ref{lem:exponent}), we can assume that $l(M)\leq 5$. Then $M$ splits $2U$. Since $M_1$ is not complete $2$-reflective, we know from Lemma \ref{lem:complete} that $M$ is not complete $2$-reflective. Combining Lemma \ref{lem:non-complete} and Lemma \ref{lem:U} we have a
    decomposition $M=A_1\oplus K$ with $l(K)\leq 4$ and thus a decomposition $M=2U\oplus A_1\oplus T$. We then determine $M$ by \cite[Theorem 1.2 (c)]{Wan19} and find that it is $2$-elementary.  
    \item $M_1'/M_1=(\ZZ/4\ZZ)\oplus (\ZZ/2\ZZ)^2$.  
    Then $l(M_1)=3$ and thus we can express $M_1=2U\oplus L_1$ such that $L_1$ has $2$-roots. There is no such $2$-reflective lattice by \cite[Theorem 1.2 (c)]{Wan19}. 
    
    \item $M_1'/M_1=(\ZZ/4\ZZ)^2$. We show that $M=M_1=2U\oplus 2A_3$.
    
    In this case, $l(M_1)=2$ and thus $M_1$ is a $2$-reflective lattice in the table of \cite[Theorem 1.2 (c)]{Wan19}. It follows that $M_1=2U\oplus 2A_3$. Assume that $M\neq M_1$. We take a lattice $M_2$ satisfying that $M<M_2<M_1<M_1'<M_2'<M$ and $M_2'/M_1'=\ZZ/2\ZZ$. When $l(M_2)\leq 3$, we can express the $2$-reflective lattice $M_2$ as $2U\oplus L_2$ such that $L_2$ has $2$-roots. By \cite[Theorem 1.2 (c)]{Wan19}, such $M_2$ does not exist. 
    
    Therefore, $l(M_2)>3$ and further $M_2'/M_2 \cong (\ZZ/4\ZZ)^2\oplus (\ZZ/2\ZZ)^2$. There are two cases:
    \begin{enumerate}
        \item $M_2 \cong U\oplus U(2)\oplus 2A_3$. We observe that $U\oplus U(2)\oplus 2A_3 \cong 2U\oplus L_2$ for some $L_2$ with $2$-roots. By \cite[Theorem 1.2 (c)]{Wan19}, such $M_2$ does not exist, a contradiction. 
        \item $M_2 \cong U\oplus A_1\oplus A_1(-1)\oplus 2A_3$. By Lemma \ref{lem:U}, we have $A_1(-1)\oplus 2A_3 \cong U\oplus T$ for some $T$. Therefore,
        $$
        U\oplus A_1\oplus A_1(-1)\oplus 2A_3 \cong 2U\oplus A_1\oplus T. 
        $$
        By \cite[Theorem 1.2 (c)]{Wan19}, such $M_2$ does not exist, a contradiction. 
    \end{enumerate}
\end{enumerate}
We finish the proof by the discussions above and the classification of $2$-elementary lattices. 
\end{proof}

At the end of this section, we give three corollaries of the main theorem. 

\begin{corollary}\label{cor1}
There are exactly $21$ complete $2$-reflective lattices of signature $(n,2)$ with $n\geq 8$ up to isomorphism. They are formulated as follows:
\begin{align*}
&2U\oplus 3E_8, \quad 2U\oplus 2E_8, \quad 2U\oplus E_8, \quad 2U\oplus E_8(2), \quad U\oplus U(2)\oplus E_8(2), \quad 2U\oplus D_8, \quad 2U\oplus 2D_4, \\  &2U\oplus D_8'(2), \quad 2U(2)\oplus 8A_1, \quad 2U\oplus D_7, \quad 2U\oplus A_7, \quad 2U\oplus E_7, \quad 2U(2)\oplus 7A_1, \quad 2U(2)\oplus 6A_1, \\  &2U\oplus D_6, \quad 2U\oplus A_6, \quad 2U\oplus 2A_3, \quad 2U\oplus 3A_2, \quad 2U\oplus E_6, \quad 2U\oplus E_6'(3), \quad U\oplus U(3)\oplus E_6'(3).
\end{align*}
\end{corollary}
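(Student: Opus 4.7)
The plan is to determine, for each of the forty-two $2$-reflective lattices listed in Table \ref{tab:2-reflective}, whether the corresponding $2$-reflective modular form is complete, that is, has simple zeros along every quadratic divisor orthogonal to a $2$-root. The analysis naturally splits according to whether the lattice in question is of the form $2U\oplus L$ or not.

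For the lattices of the form $2U\oplus L$ in Table \ref{tab:2-reflective}, I would appeal to \cite[Theorem 6.2(c)]{Wan19}, which has already been invoked several times in Section \ref{sec:proof} to decide when a $2$-reflective lattice $2U\oplus L$ is complete $2$-reflective. A direct inspection then identifies the complete ones among the table as precisely those split-$2U$ lattices for which $L$ contains no $A_1$ direct summand, in agreement with the necessary condition supplied by Lemma \ref{lem:non-complete}. This step accounts for the sixteen split-$2U$ lattices appearing in the Corollary.

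For the lattices $U\oplus U(2)\oplus mA_1$ with $m\in\{6,7,8\}$, I would argue by an overlattice trick. Each such lattice admits the even overlattice $2U\oplus mA_1$, which is $2$-reflective but not complete $2$-reflective by the previous step, since $mA_1$ obviously contains an $A_1$ summand. Lemma \ref{lem:complete} then forces $U\oplus U(2)\oplus mA_1$ itself to be non-complete, so these three lattices are excluded from the list.

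It remains to prove completeness for the five lattices not of the form $2U\oplus L$ that appear in the Corollary, namely $U\oplus U(2)\oplus E_8(2)$, $2U(2)\oplus mA_1$ for $m\in\{6,7,8\}$, and $U\oplus U(3)\oplus E_6'(3)$. For each of these I would appeal to an explicit Borcherds product construction from the literature and verify directly that its zero divisor equals the sum of all $2$-reflective divisors with multiplicity one: the form on $U\oplus U(2)\oplus E_8(2)$ is Borcherds' form from \cite{Bor96}; the forms on $2U(2)\oplus mA_1$ are the Gritsenko--Nikulin forms from \cite{GN18}, identified there with reflective forms of weight $12-m$ on $2U\oplus D_m$; and the form on $U\oplus U(3)\oplus E_6'(3)$ is identified with the $6$-reflective form on $2U\oplus 3A_2$ via \cite[Lemma 2.2]{Wan22}. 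This final step is the main obstacle, since Lemmas \ref{lem:complete} and \ref{lem:non-complete} do not apply in a useful direction to these intrinsically rescaled lattices, and completeness has to be read off from the explicit Borcherds product expansions themselves.
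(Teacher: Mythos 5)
Your proposal arrives at the correct list and covers all forty-two candidates, but it takes a more hands-on route than the paper, whose proof is essentially a citation: the determination for every lattice except $U\oplus U(2)\oplus mA_1$ ($m=6,7,8$) is delegated wholesale to \cite[Theorem 6.9]{Wan19}, and only the overlattice exclusion of those three lattices via Lemma \ref{lem:complete} is spelled out --- a step you reproduce verbatim. What you do differently is to reconstruct the content of \cite[Theorem 6.9]{Wan19} from lower-level ingredients: \cite[Theorem 6.2(c)]{Wan19} together with Lemma \ref{lem:non-complete} for the split-$2U$ lattices, and explicit Borcherds products from \cite{Bor96}, \cite{GN18} and \cite{Wan22} for the five lattices not of the form $2U\oplus L$ that remain in the list. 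This buys self-containedness (the reader need not unpack Theorem 6.9) at the cost of having to verify divisors of explicit products by hand, which the paper only records after the fact in the remark on weights following the corollary.

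Two points in your split-$2U$ step need more care than you give them. First, \cite[Theorem 6.2(c)]{Wan19} supplies necessary conditions, so it can certify non-completeness of the lattices with an $A_1$ summand, but the positive direction (that the sixteen remaining split-$2U$ lattices \emph{are} complete) must come from the contrapositive of Lemma \ref{lem:non-complete}, and that requires $2U\oplus L\not\cong A_1\oplus K$, which is not literally the same as ``$L$ has no $A_1$ direct summand.'' By \cite[Lemma 7.5]{GHS13} the condition is that $M=2U\oplus L$ contains no primitive $2$-root $v$ with $(v,M)=2\ZZ$, i.e.\ no primitive norm-$2$ vector in $M\cap 2M'$; such a $v$ may have a nonzero component in $2(U\oplus U)$ of negative norm, so one must check for each of the sixteen $L$ that $L\cap 2L'$ contains no vector of norm $2+8k$ with $k\geq 0$. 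All sixteen pass (e.g.\ for unimodular $L$ and for $E_8(2)$ the relevant norms are divisible by $4$), but this is a genuine verification, not an inspection. Second, your final step is indeed where the real work sits: completeness of the forms on $U\oplus U(2)\oplus E_8(2)$, $2U(2)\oplus mA_1$ and $U\oplus U(3)\oplus E_6'(3)$ does not follow from Lemmas \ref{lem:complete} or \ref{lem:non-complete} and has to be extracted from the product expansions; the paper avoids this entirely by citing \cite[Theorem 6.9]{Wan19}. With those two verifications supplied, your argument is complete.
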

\begin{proof}
It is a direct consequence of \cite[Theorem 6.9]{Wan19} and Theorem \ref{MTH}. The lattice $U\oplus U(2)\oplus mA_1$ is not complete $2$-reflective for $6\leq m \leq 8$, because its even overlattice $2U\oplus mA_1$ is not complete $2$-reflective (see Lemma \ref{lem:complete}). 
\end{proof}

The weights of complete $2$-reflective modular forms on $14$ of the above $21$ lattices are formulated in \cite[Table 2]{Wan19}. The complete $2$-reflective modular form has weight $12$ on $2U\oplus E_8(2)$ and $2U\oplus E_6'(3)$, weight $12-m$ on $2U(2)\oplus mA_1$ for $m=6,7,8$, weight $4$ on $U\oplus U(2)\oplus E_8(2)$ and weight $3$ on $U\oplus U(3)\oplus E_6'(3)$.

\begin{corollary}
Let $L$ be a primitive sublattice of the Leech lattice satisfying the $\mathrm{Norm}_2$ condition, that is, for any $\gamma \in L'/L$ there exists $v\in L+\gamma$ such that $(v,v)\leq 2$. 
If the rank of $L$ is greater than $5$, then $L$ is isomorphic to $E_8(2)$, $E_6'(3)$ or the Leech lattice. 
\end{corollary}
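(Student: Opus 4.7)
The plan is to reduce the classification to Theorem \ref{MTH} by constructing, via Borcherds' quasi-pullback, a $2$-reflective modular form on the lattice $2U\oplus L$.

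First, I would observe that, because $\Lambda$ has no $2$-roots and $L\subset\Lambda$ is primitive, the orthogonal complement $K=L^{\perp}_{\Lambda}$ also contains no $2$-roots. Consequently, the singular-weight Borcherds form $\Phi_{12}$ on $\cD(\II_{26,2})=\cD(2U\oplus\Lambda)$ does not vanish identically on the sub-domain $\cD(2U\oplus L)$, and its quasi-pullback is simply the ordinary pullback, a non-zero modular form of weight $12$ on $2U\oplus L$.

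Next, I would analyse the divisor of this pullback. Every $2$-root $l\in\II_{26,2}$ outside $K$ projects to a non-zero $\pi(l)\in(2U\oplus L)\otimes\QQ$, and the contribution to the divisor is the hyperplane $\pi(l)^\perp\cap\cD(2U\oplus L)$. Writing $l=\pi(l)+l_K$ with $l_K\in K\otimes\QQ$, one has $\pi(l)^2=2-l_K^2\leq 2$. The crucial claim is that the $\mathrm{Norm}_2$ condition on $L$, combined with the gluing isomorphism $A_L\cong A_K$ coming from $L\oplus K\subset\Lambda$, forces every such projection direction to be a positive rational multiple of a $2$-root of $2U\oplus L$. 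Granting this standard fact from the Borcherds-lift framework, the pullback is a $2$-reflective modular form on $2U\oplus L$, so $2U\oplus L$ is a $2$-reflective lattice.

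Since $\rank(L)>5$ implies $\rank(L)\geq 6$, the lattice $2U\oplus L$ has signature $(n,2)$ with $n\geq 8$, so Theorem \ref{MTH} applies and $2U\oplus L$ appears in Table \ref{tab:2-reflective}. Because $L\subset\Lambda$ contains no vector of norm $2$, $L$ itself has no $2$-roots. Going through the table and keeping only the entries that can be written in the form $2U\oplus L'$ with $L'$ root-free, exactly three cases remain: $L=\Lambda$ (via the identification $2U\oplus 3E_8\cong 2U\oplus\Lambda$ at $n=26$), $L=E_8(2)$ at $n=10$, and $L=E_6'(3)$ at $n=8$. Every other entry either has $2$-roots in its positive-definite part or cannot be decomposed as $2U\oplus L'$ at all; the latter case is ruled out by the same discriminant-length comparisons used in the proofs of Theorems \ref{th:n=10} and \ref{th:n=9}.

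The main obstacle is the key claim behind Step $1$: that the pullback of $\Phi_{12}$ is genuinely $2$-reflective, and not merely reflective with respect to some larger class of roots. Concretely, one must rule out that any primitive vector $u\in 2U\oplus L$ with $u^2\geq 4$ arises as the projection direction of a $2$-root of $\II_{26,2}$. This amounts to a coset-by-coset analysis under the gluing map $A_L\cong A_K$, combined with the minimum-norm condition supplied by $\mathrm{Norm}_2$, which prevents any $l=\alpha u+l_K\in\II_{26,2}$ from simultaneously satisfying $\alpha u\in(2U\oplus L)'$ and $l_K^2=2-\alpha^2u^2$ when $u^2\geq 4$.
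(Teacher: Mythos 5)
Your overall strategy coincides with the paper's: pull back the Borcherds form $\Phi_{12}$ from $2U\oplus\Lambda$ to $2U\oplus L$ and then match against the classification. But the step you yourself flag as ``the main obstacle'' --- that the pullback is genuinely $2$-reflective --- is precisely the entire content of the paper's proof, which disposes of it by citing \cite[Section 5.1]{Wan19}, where the $\mathrm{Norm}_2$ condition is shown to imply that the pullback is a \emph{complete} $2$-reflective form of weight $12$. Writing ``granting this standard fact'' leaves a genuine gap, and the mechanism you sketch for it is not quite the right one: writing a $2$-root $r\in\II_{26,2}$ as $r=r_1+r_2$ with $r_1\in(2U\oplus L)'$ and $r_2\in K'$, what must be controlled is the set of norms $r_2^2=2-r_1^2$ realized in the cosets $K+\delta$ inside $K'$ (for $K=L^\perp_\Lambda$), not the norms in $L'$ directly. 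For instance, for $L=E_8(2)$ one has $K=\Lambda_{16}$ (Barnes--Wall), and it is the fact that every nonzero coset of $K$ in $K'$ has minimal norm $\geq 2$ that forces $r_1^2\in\{2\}\cup(-\infty,0]$ and hence $2$-reflectivity; translating the $\mathrm{Norm}_2$ hypothesis on $L$ into this statement about $K$ is exactly the nontrivial gluing argument you omit. Without it, one cannot exclude divisors $r_1^\perp$ with $0<r_1^2<2$, which would be reflective but not $2$-reflective.

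A secondary difference: you conclude via Theorem \ref{MTH}, whereas the paper concludes via Corollary \ref{cor1} together with the weight-$12$ data for complete $2$-reflective forms. Your route requires ruling out that $2U\oplus L$, with $L$ root-free, is isomorphic to a table entry $2U\oplus R$ with $R$ a root lattice; since $2U\oplus L\cong 2U\oplus R$ holds whenever $L$ and $R$ lie in the same genus, ``$R$ has $2$-roots in its positive-definite part'' does not by itself exclude the entry --- one needs, say, a Hermite-bound argument showing the relevant genera contain no root-free class. The paper's use of completeness and the uniqueness of the complete $2$-reflective form (so that its weight is an invariant of the lattice, and only $2U\oplus 3E_8$, $2U\oplus E_8(2)$ and $2U\oplus E_6'(3)$ carry one of weight $12$ of the required shape) sidesteps this entirely.
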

\begin{proof}
Let $\Lambda$ denote the Leech lattice.
By \cite[Section 5.1]{Wan19}, the pullback of the Borcherds form on $2U\oplus \Lambda$ defines a complete $2$-reflective modular form of weight $12$ on $2U\oplus L$.  Note that $L$ has no $2$-roots. The result then follows from the above corollary. 
\end{proof}

\begin{corollary}
Let $M$ be an even lattice of signature $(n,2)$ with $n\geq 8$. If the ring of integral-weight modular forms for the discriminant kernel
$$
\widetilde{\Orth}^+(M) = \{ g \in \Orth^+(M) : g(x) - x \in M, \; \text{for all $x\in M'$} \}
$$
is freely generated by $n+1$ forms, then $M=2U\oplus L$ for $L=E_8$, $D_8$, $D_7$, $A_7$, $E_7$, $D_6$, $A_6$ or $E_6$. 
\end{corollary}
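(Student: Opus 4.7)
The plan is to combine a Jacobian criterion for free algebras of modular forms with Corollary~\ref{cor1}. Suppose the ring of integral-weight modular forms for $\widetilde{\Orth}^+(M)$ is freely generated by $n+1$ forms $F_0, \ldots, F_n$ of weights $k_0, \ldots, k_n$. First I would form the Jacobian determinant $J(F_0,\ldots,F_n)$ with respect to any $n+1$ linear coordinates on the affine cone $\cA(M)$. A standard argument (cf.\ \cite{Wan21}) shows that $J$ is a non-zero modular form for $\widetilde{\Orth}^+(M)$ of weight $n+1+\sum_{i=0}^n k_i$ with the determinantal character, and that its divisor on $\cD(M)$ equals, with multiplicity one on each irreducible component, the ramification divisor of the quotient map $\cD(M)\to \cD(M)/\widetilde{\Orth}^+(M)$. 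Since the reflections contained in the discriminant kernel are exactly the $\sigma_l$ with $l$ a $2$-root of $M$ satisfying $(l,M)=\ZZ$, and their mirrors are precisely the quadratic divisors $l^\perp$, the Jacobian $J$ is a complete $2$-reflective modular form for $\Orth^+(M)$ (after symmetrization). In particular $M$ is complete $2$-reflective.

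Applying Corollary~\ref{cor1} then places $M$ among the $21$ complete $2$-reflective lattices classified there. The $8$ lattices listed in the statement are precisely those of the form $2U\oplus R$ with $R$ an irreducible ADE root lattice of rank between $6$ and $8$, and each of them is known by the constructions of \cite{Wan21} to admit a free polynomial algebra of integral-weight modular forms for $\widetilde{\Orth}^+$ with $n+1$ generators. So one direction is settled and it only remains to rule out the remaining $13$ candidates of Corollary~\ref{cor1}.

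For the elimination I would use a weight-matching argument based on the Jacobian. The weight $n+1+\sum k_i$ of $J$ must equal the weight of the (essentially unique) complete $2$-reflective modular form, which is tabulated in \cite[Table~2]{Wan19} and in the remarks after Corollary~\ref{cor1} ($12$ on $2U\oplus E_8(2)$ and $2U\oplus E_6'(3)$, $12-m$ on $2U(2)\oplus mA_1$ for $m\in\{6,7,8\}$, $4$ on $U\oplus U(2)\oplus E_8(2)$, $3$ on $U\oplus U(3)\oplus E_6'(3)$, and the specific values of \cite[Table~2]{Wan19} for the remaining cases). Combined with the Hirzebruch--Mumford volume formula, which for a polynomial algebra expresses the volume of $\widetilde{\Orth}^+(M)\backslash\cD(M)$ as an explicit universal multiple of $\prod k_i$, this produces strong integrality constraints on the weight sequence $(k_0,\ldots,k_n)$. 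For each of the $13$ disqualified candidates I expect no integer sequence to realise these constraints simultaneously.

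The main obstacle is this final case-by-case verification. It is most delicate for the highest-dimensional cases $2U\oplus 2E_8$ and $2U\oplus 3E_8$, where the target ring of modular forms is enormous and the Hirzebruch--Mumford constant is very small so the weight inequality must be sharpened via known dimension growth of Borcherds products, and for the $n=10$ candidates with rescaled components such as $2U\oplus 2D_4$, $2U\oplus D_8'(2)$, $U\oplus U(2)\oplus E_8(2)$ and $2U(2)\oplus 8A_1$, where the volume and weight conditions alone leave a few admissible sequences and one must invoke the finer information about the specific Borcherds lift producing the complete $2$-reflective form, as developed in \cite{Wan21}.
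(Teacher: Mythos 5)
Your first half coincides with the paper's proof: the Jacobian of the $n+1$ generators is a complete $2$-reflective modular form, hence $M$ must be one of the $21$ lattices of Corollary \ref{cor1}. Two small inaccuracies there. The weight of the Jacobian is $n+\sum_i k_i$, not $n+1+\sum_i k_i$. And the reflections lying in $\widetilde{\Orth}^+(M)$ are the $\sigma_l$ for \emph{all} $2$-roots $l$ of $M$, not only those with $(l,M)=\ZZ$: for $(l,l)=2$ the condition $\sigma_l(x)-x=-(l,x)l\in M$ for all $x\in M'$ amounts to $l\in M$. This second slip only works in your favour, since it is exactly what makes the Jacobian \emph{complete} $2$-reflective rather than merely $2$-reflective.

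The genuine gap is the elimination of the remaining $13$ candidates. The paper disposes of them by citing a necessary condition for freeness, \cite[Theorem 4.4]{Wan21}, in combination with Corollary \ref{cor1}; that is the entire remaining content of its proof. You replace this with a weight-matching and Hirzebruch--Mumford volume scheme, but you do not actually exclude a single candidate: you state that you ``expect'' no admissible weight sequence to exist, and you concede that for $2U\oplus 2E_8$, $2U\oplus 3E_8$ and the rescaled $n=10$ lattices the volume and weight constraints alone leave admissible sequences that would have to be ruled out by ``finer information.'' As written this is a programme rather than a proof, and it is not evident that the volume identity by itself can ever close those cases, since it only constrains the product $\prod k_i$ and the sum $\sum k_i$. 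To complete the argument you must either carry out the exclusion explicitly for each of the $13$ lattices or invoke the freeness criterion of \cite{Wan21} as the paper does.
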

\begin{proof}
Suppose the ring of modular forms for $\widetilde{\Orth}^+(M)$ is freely generated by forms $F_i$ of weights $k_i$ for $1\leq i\leq n+1$. By \cite[Theorem 3.5]{Wan21}, the Jacobian of these $F_i$ is a complete $2$-reflective modular form of weight $n+\sum_{i=1}^{n+1}k_i$.  We then complete the proof by Corollary \ref{cor1} and \cite[Theorem 4.4]{Wan21}. 
\end{proof} 

\bigskip

\noindent
\textbf{Acknowledgements} 
The author is supported by the Institute for Basic Science (IBS-R003-D1). The author thanks Shouhei Ma for valuable discussions.

\bibliographystyle{plainnat}
\bibliofont
\bibliography{refs}

\end{document}